\theoremstyle{plain}
\newtheorem{theorem}{Theorem}[section]
\newtheorem{proposition}[theorem]{Proposition}
\newtheorem{corollary}[theorem]{Corollary}
\theoremstyle{remark}
\newtheorem{observation}[theorem]{Observation}
\newtheorem{remark}[theorem]{Remark}
\theoremstyle{definition}
\newcommand{\abs}[1]{\left\vert#1\right\vert}
\title{Concrete method for recovering the Euler characteristic of quantum graphs\footnote{Accepted by ``Journal of Physics A: Mathematical and Theoretical" (\href{https://doi.org/10.1088/1751-8121/ab95c1}{DOI: 10.1088/1751-8121/ab95c1}). Work under \href{https://creativecommons.org/licenses/by/4.0/deed.ast}{Creative Commons Attribution 4.0 licence} .}}
\author{Corentin L\'ena\footnote{\noindent Department of Mathematics, Stockholm University, 10691 Stockholm, Sweden. \newline Emails: \texttt{corentin@math.su.se}, \texttt{andrea.serio@math.su.se}.} \ and Andrea Serio\footnotemark[2]}
\date{4th June 2020}
\begin{document}
 \maketitle

\begin{abstract}
    Trace formulas play a central role in the study of spectral geometry and in particular of quantum graphs.
    The basis of our work is the result by Kurasov which links the Euler characteristic $\chi$ of metric graphs to the spectrum of their standard Laplacian.
    These ideas were shown to be applicable even in an experimental context where only a finite number of eigenvalues from a physical realization of quantum graph can be measured.
    
    In the present work we analyse sufficient hypotheses which guarantee the successful recovery of $\chi$.
    We also study how to improve the efficiency of the method and in particular how to minimise the number of eigenvalues required.
    Finally, we compare our findings with numerical examples---surprisingly, just a few dozens of eigenvalues can be enough.
\end{abstract}

% Uncomment for keywords
\vspace{2pc}
\noindent{\it Keywords}: quantum graphs, trace formula, Euler characteristic.

% Uncomment for Submitted to journal title message

\section{Introduction}

\subsection{Physical model}

Schr\"odinger operators on metric graphs have long been studied as simplified models of complex quantum mechanical systems, notably large molecules \cite{Pa36,Ru53}.
More recently, under the name of \emph{quantum graphs}, they have been widely used to describe nano-wires, wave-guides and networks \cite[Chapter 8]{Ex15}.
They have also been investigated by mathematicians as a simplified setting in which to understand complex problems in spectral theory, such as quantum chaos \cite{KoSm99}, trace formulas \cite{Ro84,KoSm99,Gu01,KuNo05,KuNo06} and nodal patterns \cite{Ba12}.
From this point of view, they stand midway between one-dimensional and multi-dimensional Schr\"odinger operators.
Indeed, metric graphs are one-dimensional objects with a non-trivial geometry.
The central theme in studying quantum graphs is therefore the interaction of the spectrum with the geometry of the underlying metric graph. 

\subsection{Mathematical problem}

We consider in this work a finite, compact and connected metric graph $\Gamma$. Following \cite{Ku20}, we describe it by a set of $N$ edges $\{e_1,\dots,e_N\}$, with $e_n=[x_{2n-1},x_{2n}]$, and a set of $M$ vertices $\{v_1,\dots,v_M\}$, which is a partition of the set of endpoints $\{x_p\,:\,1\le p\le 2N\}$. The associated quantum graph is an operator in the Hilbert space 
\begin{equation*}
    L_2(\Gamma)=\oplus_{n=1}^N L_2(e_n).
\end{equation*}
More precisely, we define the \emph{standard Laplacian} $L_\Gamma$ on $\Gamma$, which is the object of our study, in the following way. Its domain $\mbox{Dom}(L_\Gamma)$ consists of functions $f\in L_2(\Gamma)$ satisfying
\begin{enumerate}
    \item for every $1\le n\le N$, $ f|_{e_n}\in W_2^2(e_n)$;
    \item for every $1\le m\le M$,
    \begin{enumerate}
        \item $f(x_p)=f(x_q)$ whenever the endpoints $x_p$ and $x_q$ belong to the vertex $v_m$ (continuity condition),
        \item $\sum_{x_p\in v_m}\partial f(x_p)=0$ (balance condition).
    \end{enumerate}
\end{enumerate}
In the balance condition, $\partial f(x_p)$ is the derivative along the edge with endpoint $x_p$, taken towards the inside of the edge. The conditions in (ii) are called the standard vertex conditions. The action of $L_\Gamma$ on $f\in \mbox{Dom}(L_\Gamma)$ is then given by the differential expression $L_\Gamma f(x)=-f''(x)$ for all $n\in \{1,\dots,N\}$ and $x$ in the interior of $e_n$. 

It is known that the operator $L_\Gamma$ is self-adjoint, non-negative and has compact resolvent.
Its spectrum therefore consists of a sequence of non-negative eigenvalues $(\lambda_j)_{j\ge1}$ of finite multiplicity tending to $+\infty$ (we repeat them according to their multiplicities).
Let us note that $\lambda_1$ is always $0$ and that the hypothesis that $\Gamma$ be connected implies that it is simple, with the associated eigenfunction proportional to the constant function $1$.
For the sake of convenience, we will work in the rest of the paper with the \emph{eigenfrequencies} $k_j=\sqrt{\lambda_j}$.   

Our goal is to recover information about the geometry of the graph from the knowledge of some of its eigenfrequencies.
This is a variation on Kac's famous problem ``Can one hear the shape of a drum?" \cite{Ka66}, with some specificities that we will address below. The analogue to Kac's problem in our setting would  be showing that two graphs with the same spectrum are isometric.
There are partial results in that direction. For instance, the eigenfrequencies of a quantum graph satisfy Weyl's law: as $j\to+\infty$, $k_j\sim\pi j/{\mathcal{L}}$, where $\mathcal L$ is the total length of the graph, i.e. the sum of the lengths of all its edges ({see for instance \cite{Be13}}). The total length of the graph can therefore be read off the complete spectrum. Furthermore, the answer to Kac's problem is positive provided all edge-lengths of the graph are rationally independent \cite{Gu01, KuNo05}. Nevertheless, if this last condition is not satisfied, we can find graphs with the same spectrum that are not isometric \cite{Gu01, Ba06, Ba09, Ba10}.

We will restrict ourselves to recovering the main topological invariant of a connected graph: its Euler characteristic $\chi$, defined by $\chi=M-N$. It is related to $\beta_1$, the number of independent cycles in the graph, by
$\beta_1=1-\chi$. { Let us point out that $\beta_1$ can also be recovered from the number of zeros of the eigenfunctions, as shown in \cite{AlBaBe18}. These numbers are however less easily accessible to experiments.}

\subsection{Objectives}

We focus on a situation that can be realized experimentally.
A microwave network can be described to a good approximation by a quantum graph, whose eigenfrequencies correspond to resonances of the system.
In \cite{Mi20} the authors describe the measurement of about 50 resonances of such networks and use this experimental result to recover $\chi$.
Contrary to expectations, they find this relatively small number of eigenfrequencies to be sufficient.
Their computation is based on a trace formula as we will see below in more details.
Extending the work in this paper, we want to check the correctness of the method and modify it in order to improve the efficiency, or namely to reduce the number of eigenfrequencies required.

The total length of the graph is a continuous parameter and an infinite number of eigenfrequencies are needed to compute it using Weyl's law, as described above.
By contrast, $\chi$ is integer valued and we can hope to compute it with only a finite number of them.
However, some assumptions on the graph are needed.
Indeed, by attaching a very small loop to some edge in an existing graph, we decrease $\chi$ by $1$ while altering the first $50$ eigenfrequencies by an arbitrarily small amount.
We come back to this point at the end of Section \ref{se:numerics}.
Therefore, our goal will also be to determine a reasonable set of assumptions that guarantees the success of our methods.

\section{Trace formula}
\label{sec:Trace}

\subsection{Main tool}

{All our methods for computing the Euler characteristic are based on the trace formula in \cite{KuNo05, KuNo06} (building on previous work in \cite{Ro84,KoSm99, Gu01}).}
This formula connects the spectrum of $L_\Gamma$, defined in the previous section, with the set of periodic orbits in $\Gamma$, to be described below. More precisely, the spectrum is described by the distribution 
\begin{equation}
\label{eq:def_u}
    u(k):=2\delta(k)+\sum_{k_j>0}\Big(\delta(k-k_j)+\delta(k+k_j)\Big),
\end{equation}
which is a tempered distribution of order $0$.
We denote by $\mathcal P$ the set of closed continuous paths in $\Gamma$ that are allowed to turn back on themselves only at vertices, which we call \emph{periodic orbits}, or \emph{orbits}.
If $p\in \mathcal P$, we denote by $\mbox{prim}(p)$ any shortest orbit such that $p$ can be obtained by repeating $\mbox{prim}(p)$.
Thus any two such orbits have the same length, hence $\ell(\mbox{prim}(p))$ is well defined.
The following quantity will play a crucial role in our methods:
\begin{equation}\label{def:lmin}
    \ell_{\mbox{\tiny min}}:=\min\{\ell(p)\,:\,p\in\mathcal P\}.
\end{equation}{
\begin{observation}
    Notice that $\ell_{\mbox{\tiny min}}$ is given by the minimum between the length of the shortest loop and twice the length of the shortest edge.
\end{observation}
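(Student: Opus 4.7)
The plan is to establish the equality by proving the two inequalities separately.

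For the upper bound, namely that $\ell_{\mbox{\tiny min}}$ is at most the length of the shortest loop and at most twice the length of the shortest edge: the shortest loop, traversed once, is a closed continuous path that never turns back (it simply closes at the common endpoint of the loop edge), and a back-and-forth traversal of the shortest edge is a closed path that turns back exactly once, at the far vertex. Both paths lie in $\mathcal P$, so $\ell_{\mbox{\tiny min}}$ is bounded above by the smaller of these two lengths.

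For the reverse inequality, I would take an arbitrary $p \in \mathcal P$ and split into three cases according to its structure. Throughout, the key observation is that once the parameterised path $p$ enters an edge $e$ at one endpoint, it cannot reverse direction before reaching the other endpoint, because reversing is allowed only at vertices. \textbf{First}, if $p$ traverses a loop edge $e$, then $\ell(p) \ge \ell(e)$, which is at least the length of the shortest loop. \textbf{Second}, if $p$ contains no loop but backtracks along some non-loop edge $e$ at some vertex, then by the key observation $p$ must have traversed $e$ fully just before the turn and then once more in the opposite direction, giving $\ell(p) \ge 2\ell(e)$, hence at least twice the length of the shortest edge. \textbf{Third}, if $p$ uses no loop and never backtracks, then the combinatorial walk underlying $p$ is a non-backtracking closed walk on distinct non-loop edges; since no single non-loop edge closes up on itself, such a walk consists of at least two edges, so $\ell(p)$ is again at least twice the length of the shortest edge.

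Combining the three cases gives the desired lower bound for every $p \in \mathcal P$, matching the upper bound. The main delicate point is formalising the key observation: it amounts to a topological statement about the metric graph, namely that a continuous path cannot reverse direction at an interior point of a one-dimensional edge without violating either continuity or the ``turn back only at vertices'' rule. Once this is made precise, the case analysis is routine, and the explicit orbits exhibited in the first paragraph show that the bound is sharp.
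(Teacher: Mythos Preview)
Your argument is correct. The paper itself offers no proof of this observation: it is stated as a self-evident remark immediately after the definition of $\ell_{\mbox{\tiny min}}$, so there is no ``paper's own proof'' to compare against. Your two-inequality strategy (exhibit explicit short orbits for the upper bound; case-split an arbitrary orbit for the lower bound) is the natural way to justify the claim and would serve well as the proof the paper omits.

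One small imprecision: in Case~3 you write ``a non-backtracking closed walk on \emph{distinct} non-loop edges''. Non-backtracking walks may repeat edges (just not consecutively), so ``distinct'' should be dropped; what you actually need, and what your next sentence uses, is that the walk contains at least two edge-traversals, which follows since a single traversal of a non-loop edge has different endpoints and hence is not closed. With that wording fixed, the case analysis is clean.
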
}

\begin{proposition}[Theorem 1 from \cite{KuNo05, KuNo06}, also \cite{Ku20}]\label{prp:trace_formula}
    We have
\begin{align}
    \label{eq:u}
    u(k) &= \chi \delta(k) + \frac{\mathcal{L}}{\pi} + \frac{1}{\pi} \sum_{p \in \mathcal{P}} \ell(\mbox{prim}(p))S_V(p) \cos k \ell(p),
 \end{align}
where $S_V(p)$ is the product of all the vertex scattering coefficients along the path $p$. { The convergence of the series occurs in the space $\mathcal S'(\mathbb R)$ of tempered distributions.}
\end{proposition}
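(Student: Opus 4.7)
The plan is to derive the formula via the secular (scattering) approach. I would introduce, on the space $\mathbb{C}^{2N}$ indexed by directed edges, the edge-propagation matrix $T(k)$, which is diagonal with entries $e^{ik\ell_n}$, and the $2N\times 2N$ vertex scattering matrix $S$ implementing the standard conditions at each vertex (at a vertex of degree $d$, the block is $\tfrac{2}{d}J - I$). For standard conditions $S$ is $k$-independent and unitary, and the positive eigenfrequencies $k_j$ are precisely the zeros (with multiplicity) of the secular determinant $\Sigma(k):=\det\bigl(I - S\,T(k)\bigr)$.

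Next, I would express $u(k)$ as a distributional logarithmic derivative of $\Sigma$. Using an argument-principle identity, for $k>0$ the sum $\sum_j\delta(k-k_j)$ equals $\tfrac{1}{2\pi i}\partial_k \log\bigl(\Sigma(k-i0)/\Sigma(k+i0)\bigr)$, and symmetrising in $k\mapsto -k$ (the formula is even in $k$) produces $u(k)$ up to behaviour at the origin. The periodic-orbit side is then obtained by expanding
\begin{equation*}
    \log\det\bigl(I - S\,T(k)\bigr) = -\sum_{n=1}^{\infty}\frac{1}{n}\,\mathrm{Tr}\bigl((S\,T(k))^{n}\bigr),
\end{equation*}
and observing that $\mathrm{Tr}\bigl((S\,T(k))^{n}\bigr)$ is a sum over closed directed-edge walks of combinatorial length $n$, each walk contributing the product of vertex scattering coefficients along it times $e^{ik\ell(p)}$. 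Such closed walks are exactly the elements of $\mathcal{P}$, and grouping each orbit with its iterates turns the factor $1/n$ into $\ell(\mathrm{prim}(p))/\ell(p)$; after differentiation in $k$ and symmetrisation this yields the coefficient $\ell(\mathrm{prim}(p))\,S_V(p)\cos k\ell(p)$ claimed.

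The non-oscillatory terms are recovered by an asymptotic analysis. The Weyl constant $\mathcal{L}/\pi$ comes from the linear-in-$k$ part of $\log\Sigma$ and is equivalent to the mean density of states $\lim_{K\to\infty} \tfrac{1}{K}\#\{k_j\le K\} = \mathcal{L}/\pi$. The $\chi\delta(k)$ contribution requires a careful local analysis at $k=0$: the multiplicity of $k=0$ as a zero of $\Sigma$ is the dimension of $\ker(I-S)$ on the endpoint space, which equals $M$ (the number of vertices, since each vertex contributes a one-dimensional kernel spanned by the vector constant on its incident half-edges), and this must be balanced against the $2\delta(k)$ in the definition of $u$ and the $N$ contributions coming from the edge-propagation block to yield the difference $M-N=\chi$.

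The main obstacle is making the orbit sum rigorous in $\mathcal{S}'(\mathbb{R})$, since it does not converge pointwise: the number of periodic orbits of length $\le L$ grows exponentially. The way I would handle this is to pair the series with a Schwartz test function $\varphi$ whose Fourier transform $\hat\varphi$ has compact support, use $|S_V(p)|\le 1$ and the edge-length lower bound to get an absolutely convergent sum of values $\hat\varphi(\ell(p))$, and then extend to general $\varphi\in\mathcal{S}(\mathbb{R})$ by a density/continuity argument in the tempered-distribution topology. This last step, together with the bookkeeping identifying the coefficient of $\delta(k)$ as precisely $\chi$, is where the proof requires the most care.
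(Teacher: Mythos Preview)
The paper does not prove this proposition at all: it is quoted verbatim as ``Theorem~1 from \cite{KuNo05,KuNo06}'' and used as a black box. So there is no in-paper argument to compare against. That said, your outline via the secular determinant $\Sigma(k)=\det(I-S\,T(k))$, the logarithmic derivative, and the expansion $\log\det(I-ST)=-\sum_{n\ge1}\tfrac1n\mathrm{Tr}\,(ST)^n$ is exactly the route taken in the cited references (Kottos--Smilansky, Kurasov--Nowaczyk), so in spirit you are reproducing the original proof rather than inventing a new one.

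There is, however, a genuine gap in your last paragraph. Your plan is to check convergence of the orbit series against Schwartz functions $\varphi$ with $\hat\varphi$ compactly supported, and then ``extend to general $\varphi\in\mathcal S(\mathbb R)$ by a density/continuity argument''. Density is fine, but continuity is precisely what is at stake: knowing that the partial sums $U_N[\varphi]$ converge on a dense subspace does \emph{not} imply convergence for every $\varphi\in\mathcal S$ unless you already know the family $\{U_N\}$ is equicontinuous (bounded) in $\mathcal S'$, which is essentially the statement you are trying to prove. The obstruction is real: the number of periodic orbits of length $\le L$ grows exponentially in $L$, while Schwartz decay of $\hat\varphi$ is only faster-than-polynomial, so the crude bound $|S_V(p)|\le1$ gives a divergent majorant. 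In the actual proof one does not estimate the orbit sum term by term; instead one works on the spectral side (where $u$ is manifestly tempered because $k_j\sim \pi j/\mathcal L$) and identifies its Fourier transform with the orbit expansion, or equivalently one controls the partial sums through the unitarity of $ST(k)$ (so that $|\mathrm{Tr}\,(ST(k))^n|\le 2N$ uniformly), which provides the needed a~priori bound. Your sketch of the $\chi\delta(k)$ bookkeeping is in the right direction but, as you yourself note, this is the step that requires the most care; the balance $M-N$ emerges from comparing the order of vanishing of $\Sigma$ at $k=0$ with the actual spectral multiplicity and the $2\delta(k)$ built into the definition of $u$.
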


{
\begin{remark}
The terms $S_V(p)$ will ultimately drop out of our formulas due to our choice of test functions. Since they will not play a role in our analysis we just refer to \cite{KuNo05} for their definition.
\end{remark}}

\begin{remark}
Taking the Fourier transform on both sides of Equation (\ref{eq:u}), we obtain
\begin{equation}
\label{eq:hat_u}
	 \hat u(\ell)=\chi+2\mathcal L \delta(\ell)+\sum_{p\in\mathcal P}\ell(\mbox{prim}(p))S_V(p)\Big(\delta(\ell-\ell(p))+\delta(\ell+\ell(p))\Big),
\end{equation}
which is the form of the Trace Formula that we will be using in the rest of the paper.
\end{remark}

{In all the paper, we define the Fourier transform so that $\hat f(k)=\int_{-\infty}^{+\infty} e^{ik\ell}f(\ell)\,d\ell$ for any $f\in L_1(\mathbb R)$.}
By definition of the Fourier transform of a tempered distribution, for $\varphi$ in the Schwartz class $\mathcal S(\mathbb{R})$, $\hat u[\varphi]=u[\hat\varphi]$, which means, using Equations (\ref{eq:def_u}) and (\ref{eq:hat_u}),
\begin{align}
 \chi\int_{-\infty}^{+\infty}\varphi(\ell)\,d\ell+2\mathcal L\varphi(0)+\sum_{p\in\mathcal P}\ell(\mbox{prim}(p))S_V(p)\left(\varphi(-\ell(p))+\varphi(\ell(p))\right)\nonumber\\
	=2 \hat {\varphi}(0) +\sum_{k_j>0}\Big(\hat {\varphi}(k_j)+\hat {\varphi}(-k_j)\Big).\label{eq:series}
\end{align}
Equation (\ref{eq:series}) also holds for some functions not in $\mathcal S(\mathbb R)$.
For our purpose, we will consider the set $\mathcal T$ of functions $\varphi$ satisfying the following assumptions:
\begin{enumerate}
    \item $\varphi$ is continuous on $\mathbb R$ and nonnegative;
	\item $\mbox{supp}(\varphi) \subseteq [0,T]$ for some $T > 0$;
	\item $\int_{-\infty}^{+\infty} \varphi(\ell)\,d\ell= 1$;
	\item the function $F_\varphi$ is integrable on $(0,+\infty)$, where $F_\varphi(k):=\sup_{y\ge k}|\Re(\hat \varphi(y))|$.
\end{enumerate}
Let us make some comments on Condition (iv).
The function $F_\varphi$ can be seen as the smallest non-increasing majorant of $|\Re(\hat\varphi)|$ on $(0,+\infty)$.
Its integrability is used both to express $\chi$ as the sum of a convergent series and to prove error estimates.

\begin{proposition}
    If $\varphi\in \mathcal T$, 
\begin{equation}
    \label{eq:series2}
 \chi+\sum_{p\in\mathcal P}\ell(\mbox{prim}(p))S_V(p)\Big(\varphi(-\ell(p))+\varphi(\ell(p))\Big)=
	2 \hat {\varphi}(0) +2\sum_{k_j>0}\Re\left(\hat {\varphi}(k_j)\right).
\end{equation}
\end{proposition}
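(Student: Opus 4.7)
My plan is to reduce to identity (\ref{eq:series}), which is known for Schwartz $\varphi$, by a mollification argument. First I would simplify the target identity using the hypotheses. Since $\varphi$ is continuous with $\mathrm{supp}(\varphi)\subseteq[0,T]$, one has $\varphi(\ell)=0$ for $\ell<0$, and by continuity $\varphi(0)=0$; therefore the term $2\mathcal L\varphi(0)$ in (\ref{eq:series}) disappears. For the same reason, for each $p\in\mathcal P$ at least one of $\varphi(\ell(p))$ and $\varphi(-\ell(p))$ already vanishes, and only periodic orbits with $\ell(p)\in[0,T]$ contribute. There are finitely many such orbits: each consists of a sequence of edges of total length at most $T$, and since the shortest edge has positive length and there are finitely many edges, this gives finitely many possibilities. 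Finally, $\varphi$ being real-valued forces $\hat\varphi(-k)=\overline{\hat\varphi(k)}$, so $\hat\varphi(k_j)+\hat\varphi(-k_j)=2\,\Re(\hat\varphi(k_j))$. Hence (\ref{eq:series2}) is just (\ref{eq:series}) rewritten, provided both sides make sense.

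Next I would check that the spectral series on the right-hand side converges absolutely. By definition $|\Re(\hat\varphi(k_j))|\le F_\varphi(k_j)$, and $F_\varphi$ is non-increasing by construction and integrable on $(0,+\infty)$ by assumption (iv). Using Weyl's law $k_j\sim\pi j/\mathcal L$, an elementary integral comparison
\[
    \sum_{k_j>0}F_\varphi(k_j)\;\le\;C\int_0^{+\infty}F_\varphi(k)\,dk\;<\;+\infty
\]
gives absolute convergence. The same estimate will serve as the dominating bound in the next step.

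The core step is the limiting argument. Let $\rho_\epsilon(\ell)=\epsilon^{-1}\rho(\ell/\epsilon)$ be a smooth, nonnegative mollifier with $\int\rho=1$ and $\mathrm{supp}(\rho)\subseteq[-1,1]$, and set $\varphi_\epsilon:=\rho_\epsilon\ast\varphi\in C^\infty_c(\mathbb R)\subseteq\mathcal S(\mathbb R)$. Applying (\ref{eq:series}) to $\varphi_\epsilon$ and passing to the limit as $\epsilon\to 0^+$ should give (\ref{eq:series2}). On the left: $\int\varphi_\epsilon=1$ for all $\epsilon$; $\varphi_\epsilon(0)\to\varphi(0)=0$; and since only finitely many orbits contribute (uniformly for small $\epsilon$), the finite sum converges termwise by uniform convergence $\varphi_\epsilon\to\varphi$. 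On the right: $\hat\varphi_\epsilon=\hat{\rho_\epsilon}\,\hat\varphi$ with $|\hat{\rho_\epsilon}|\le\int\rho_\epsilon=1$ and $\hat{\rho_\epsilon}(k)=\hat\rho(\epsilon k)\to 1$ pointwise, so $|\Re(\hat\varphi_\epsilon(k_j))|\le F_\varphi(k_j)$ uniformly in $\epsilon$, and dominated convergence for series yields the desired convergence of $\sum_{k_j>0}\Re(\hat\varphi_\epsilon(k_j))$ to $\sum_{k_j>0}\Re(\hat\varphi(k_j))$, together with $\hat\varphi_\epsilon(0)\to\hat\varphi(0)$. The main obstacle, and the only substantive one, is the summability step: making sure that the integrability hypothesis on $F_\varphi$ combined with Weyl's law really delivers a dominating sequence independent of $\epsilon$, so that dominated convergence applies to the infinite spectral sum.
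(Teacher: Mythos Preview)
Your approach is essentially identical to the paper's (Appendix~A): mollify $\varphi$ into the Schwartz class, apply (\ref{eq:series}) to the mollification, and pass to the limit via dominated convergence on the spectral side, using condition~(iv) together with Weyl's law for the dominating sequence. Your additional remarks (finiteness of the orbit sum, $\varphi(0)=0$, the identity $\hat\varphi(-k_j)+\hat\varphi(k_j)=2\Re\hat\varphi(k_j)$) are all correct and make explicit what the paper leaves implicit.

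There is one small gap. You write ``$|\hat\rho_\epsilon|\le 1$ \dots\ so $|\Re(\hat\varphi_\epsilon(k_j))|\le F_\varphi(k_j)$'', but $F_\varphi$ majorises only $|\Re\hat\varphi|$, not $|\hat\varphi|$. If $\hat\rho_\epsilon(k_j)$ is complex, then
\[
\Re\bigl(\hat\rho_\epsilon\,\hat\varphi\bigr)=\Re(\hat\rho_\epsilon)\,\Re(\hat\varphi)-\Im(\hat\rho_\epsilon)\,\Im(\hat\varphi),
\]
which drags in $\Im\hat\varphi$, and the bound $|\hat\rho_\epsilon|\le 1$ alone no longer yields domination by $F_\varphi(k_j)$. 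The fix is exactly the one the paper makes: take the mollifier $\rho$ \emph{even}. Then $\hat\rho$ is real-valued, hence $\Re(\hat\varphi_\epsilon(k))=\hat\rho(\epsilon k)\,\Re(\hat\varphi(k))$ and the desired domination $|\Re(\hat\varphi_\epsilon(k_j))|\le |\Re(\hat\varphi(k_j))|\le F_\varphi(k_j)$ is immediate. With that one-word addition your argument is complete.
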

We prove the above proposition in \ref{appTrace}.
Let us note that whenever $\varphi$ is in $\mathcal T$, so is $\varphi_t(l):=t\varphi(t\ell)$, for any $t>0$. 

We can now state the result by which we recover the Euler characteristic of $\Gamma$.

\begin{proposition}\label{prp:exact}
    For any $\varphi \in \mathcal{T}$ such that $T=1$ in (ii) then
    \begin{equation}
        \label{eq:chi_series}
        \chi= 2 \hat {\varphi_t}(0) +2\sum_{k_j>0}\Re\left(\hat {\varphi_t}(k_j)\right) \qquad \forall t \geq 1/\ell_{\mbox{\tiny min}},
    \end{equation}
    where $\ell_{\mbox{\tiny min}}$ is defined in (\ref{def:lmin}).
\end{proposition}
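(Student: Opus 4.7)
The plan is to apply Equation~(\ref{eq:series2}) with $\varphi_t$ in place of $\varphi$ and show that the left-hand side orbit sum reduces to $\chi$ alone, i.e.\ every term $\varphi_t(\pm\ell(p))$ vanishes when $t\ge 1/\ell_{\mbox{\tiny min}}$. This is legitimate because the paper already observed that $\varphi_t\in\mathcal T$ whenever $\varphi\in\mathcal T$, so Proposition~(\ref{eq:series2}) applies verbatim.

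The key step is a support argument. Since $T=1$ in condition~(ii), $\mathrm{supp}(\varphi)\subseteq[0,1]$, and rescaling gives
\begin{equation*}
  \mathrm{supp}(\varphi_t)\subseteq[0,1/t].
\end{equation*}
For any periodic orbit $p\in\mathcal P$ we have $\ell(p)>0$, so $-\ell(p)<0$ lies outside this support and $\varphi_t(-\ell(p))=0$ automatically. For the positive side, $\ell(p)\ge\ell_{\mbox{\tiny min}}$ by definition~(\ref{def:lmin}), whereas $t\ge 1/\ell_{\mbox{\tiny min}}$ forces $1/t\le\ell_{\mbox{\tiny min}}$. Hence either $\ell(p)>1/t$, so that $\varphi_t(\ell(p))=0$ because we are outside the support, or we are in the single boundary case $\ell(p)=\ell_{\mbox{\tiny min}}=1/t$ (only possible at the threshold $t=1/\ell_{\mbox{\tiny min}}$).

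The main—though minor—obstacle is this boundary case. Here continuity of $\varphi$ (condition~(i)) rescues us: since $\varphi$ is continuous on $\mathbb R$ and vanishes on $(1,+\infty)$, necessarily $\varphi(1)=0$. Consequently $\varphi_t(1/t)=t\,\varphi(1)=0$, so these terms vanish too. Putting the two cases together, every summand of the orbit series in~(\ref{eq:series2}) is zero, leaving
\begin{equation*}
  \chi=2\hat{\varphi_t}(0)+2\sum_{k_j>0}\Re\!\left(\hat{\varphi_t}(k_j)\right),
\end{equation*}
which is~(\ref{eq:chi_series}). The convergence of the series on the right is guaranteed by condition~(iv), already built into the definition of $\mathcal T$, so no additional work is required there.
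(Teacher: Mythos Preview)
Your proof is correct and follows the same approach as the paper: apply Equation~(\ref{eq:series2}) to $\varphi_t$ and observe that the orbit terms vanish because $\mathrm{supp}(\varphi_t)\subseteq[0,1/t]\subseteq[0,\ell_{\mbox{\tiny min}}]$. You are in fact more careful than the paper's own proof, since you explicitly treat the boundary case $\ell(p)=1/t$ via the continuity of $\varphi$, whereas the paper leaves this implicit.
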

\begin{proof}
Since $t \geq 1/\ell_{\mbox{\tiny min}}$, the support of $\varphi_t$ is contained in $[0,\ell_{\mbox{\tiny min}}]$.
Equation (\ref{eq:chi_series}) then follows immediately from Equation (\ref{eq:series2}). 
\end{proof}
In the rest of the paper, we assume that $t \ge 1/\ell_{\mbox{\tiny min}}$.
When we will analyse our method, it will accordingly be important to know a lower bound of $\ell_{\mbox{\tiny min}}$ in order to guarantee a successful estimate of $\chi$. 
{    
On the other hand, from the above proposition we also have the following corollary, cf. \cite{Ku08}.
\begin{corollary}\label{cor:exact}
    For any $\varphi \in \mathcal{T}$ then
    \begin{equation}
        \label{eq:chi_limit}
        \chi=  \lim_{t \to \infty} \left( 2 \hat {\varphi_t}(0) + 2\sum_{k_j>0}\Re\left(\hat {\varphi_t}(k_j)\right) \right).
    \end{equation}
\end{corollary}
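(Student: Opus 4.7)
The plan is to reduce the corollary to an application of Equation (\ref{eq:series2}) after a suitable rescaling, bypassing the $T=1$ hypothesis used in Proposition \ref{prp:exact}. Given $\varphi \in \mathcal{T}$ with $\mathrm{supp}(\varphi)\subseteq [0,T]$, I would first observe that $\varphi_t(\ell)=t\varphi(t\ell)$ has support contained in $[0,T/t]$, and that $\varphi_t\in \mathcal{T}$ for every $t>0$, as already noted in the paper. Consequently, as soon as $t \geq T/\ell_{\mbox{\tiny min}}$, we have $\mathrm{supp}(\varphi_t)\subseteq [0,\ell_{\mbox{\tiny min}}]$.

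Next I would apply Equation (\ref{eq:series2}) to $\varphi_t$ for such $t$. Since every periodic orbit $p$ satisfies $\ell(p)\geq \ell_{\mbox{\tiny min}}$, and since $\varphi$ is continuous and supported in $[0,T]$, we have $\varphi(T)=0$ and therefore $\varphi_t(\ell(p))=0$ for every $p\in\mathcal{P}$ whenever $t\geq T/\ell_{\mbox{\tiny min}}$. Hence the entire periodic-orbit sum on the left-hand side of Equation (\ref{eq:series2}) vanishes, leaving the exact identity
\begin{equation*}
    \chi = 2\hat{\varphi_t}(0) + 2\sum_{k_j>0}\Re\bigl(\hat{\varphi_t}(k_j)\bigr)
    \qquad \text{for all } t \geq T/\ell_{\mbox{\tiny min}}.
\end{equation*}
Since the right-hand side is constantly equal to $\chi$ on the half-line $[T/\ell_{\mbox{\tiny min}},+\infty)$, the limit as $t\to\infty$ trivially exists and equals $\chi$, which is exactly Equation (\ref{eq:chi_limit}).

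I do not anticipate any real obstacle: the corollary is essentially Proposition \ref{prp:exact} stripped of the normalization $T=1$, and the scaling $\varphi\mapsto \varphi_t$ does all the work. The only minor point worth verifying carefully is that $\varphi_t$ remains in $\mathcal{T}$—in particular that Condition (iv) is preserved—but this follows directly from $\hat{\varphi_t}(k)=\hat\varphi(k/t)$, which implies $F_{\varphi_t}(k)=F_\varphi(k/t)$ and hence $\int_0^{+\infty}F_{\varphi_t}(k)\,dk = t\int_0^{+\infty}F_\varphi(k)\,dk<+\infty$.
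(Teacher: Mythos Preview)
Your proposal is correct and is essentially the paper's own argument: the paper derives the corollary directly from Proposition~\ref{prp:exact}, and what you do is reproduce the proof of that proposition verbatim with a general $T$ in place of $T=1$, which amounts to the same rescaling trick (equivalently, apply Proposition~\ref{prp:exact} to $\psi:=\varphi_T$, which has $T=1$, and note that $\psi_t=\varphi_{tT}$). Your check that $\varphi_t\in\mathcal{T}$ and that $\varphi_t(\pm\ell(p))=0$ for $t\ge T/\ell_{\mbox{\tiny min}}$ is clean and correct.
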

    The limit and summation signs in (\ref{eq:chi_limit}) cannot be exchanged since $\lim_{t \to \infty} \Re\left(\hat {\varphi_t}(k)\right) = \hat {\varphi}(0)$ independently of $k$.
    Hence it is clear that (\ref{eq:chi_limit}) cannot be approximated by the limit of a finite sum over the  smallest, say $J$, eigenfrequencies.
    Because of Assumption (iv) on $\varphi$, the summation will be shown to be absolutely converging (see Section \ref{sec:gen}).
    Based on that, our plan is to quantify for each $t$ how many eigenfrequencies $J$ are necessary for the truncated summation to differ from (\ref{eq:chi_limit}) by less than $1/2$, and hence the nearest integer to the sum to coincide with $\chi$.
    Theorem \ref{thm:opt_d} gives an optimal (minimal) number $J^*$  of required eigenfrequencies, based on the choice of $t$.
    As one can expect the worse the estimate of $\ell_{\mbox{\tiny min}}$ (smaller lower bound), hence the larger $t$ is, the larger the number $J^*$ of required eigenfrequencies.
}

\begin{remark}
    {The above trace formula holds for the standard Laplacian, specifically for standard vertex conditions imposed at all the vertices.
    In \cite{Bo09, Ku10} are presented versions of the trace formula for more general vertex conditions  where in particular the constant term in $\hat{u}$ may differ from $\chi$.}
    Hence our method might not be applicable in presence of vertex conditions other than standard.
    It is even possible that $\chi$ cannot be recovered knowing the whole spectrum of the graph: see \cite{Ba06, Ba09, Ba10} for an example of two graphs with mixed vertex conditions which have distinct values of $\chi$ despite being isospectral.
\end{remark}

\subsection{Choice of the test function}

Let us apply Formula (\ref{eq:chi_series}) to a triangular function: 
\begin{equation*}
    \psi(\ell) := 4\ell \mathds{1}_{[0,1 / 2]}(\ell) + 4(1-\ell)\mathds{1}_{[1 / 2, 1]}(\ell).
\end{equation*}
Everywhere in the paper, $\mathds{1}_A$ stands for the characteristic function of the set $A$. We find
\begin{align*}
    \label{eq:FT_tr}
    &\hat\psi(k) := \int_0^1\psi(\ell)e^{ik\ell}\,d\ell
	=e^{ik/2} \left(\frac{\sin(k/4)}{k/4}\right)^2.
 \end{align*}
Proposition (\ref{prp:exact}) then gives us the following expression for $\chi$, assuming $t \geq 1 / \ell_{\mbox{\tiny min}}$:
\begin{equation}
        \label{eq:chi_tr}
        \chi= 2+2\sum_{k_j>0}\cos(k_j/2t)\left(\frac{\sin(k_j/4t)}{k_j/4t}\right)^2.
\end{equation}

We can also apply the same formula to a truncated trigonometric function:
\begin{equation*}
    \varphi(\ell) := (1-\cos(2 \pi \ell))\mathds{1}_{[0,1]}(\ell)
\end{equation*}
We find
\begin{align*}
    \label{eq:FT_cos}
    \hat\varphi(k) &:= \int_0^1\left(1-\frac{e^{2i\pi\ell}}{2}-\frac{e^{-2i\pi\ell}}{2}\right)e^{ik\ell}\,d\ell\\
	&=-\frac{8\pi^2}{k(k-2\pi)(k+2\pi)}e^{ik/2}\sin(k/2)\\\
 \end{align*}
We deduce another expression for $\chi$, valid whenever $t \geq 1 / \ell_{\mbox{\tiny min}}$:
\begin{equation}
        \label{eq:chi_cos}
        \chi= 2-8\pi^2\sum_{k_j>0}\frac{\sin(k_j/t)}{k_j/t(k_j/t-2\pi)(k_j/t+2\pi)}.
\end{equation}
This formula is the one employed in \cite{Mi20}.

In order to recover $\chi$ from a finite set of eigenfrequencies, we have to replace the series on the right-hand side of Formulas (\ref{eq:chi_tr}) and (\ref{eq:chi_cos}) by partial sums.
Since the terms in the second series go to zero more quickly than in the first, we can expect that fewer eigenfrequencies are necessary to find a close approximation of $\chi$, which we will then be rounded (up or down) to the nearest integer.
Both numerical trials and the analysis of the experimental results in \cite{Mi20} confirm this.
The more regular $\varphi$ is, the faster $\hat{\varphi}$ decays at infinity.
We can then wonder how much we can decrease the number of eigenfrequencies used to compute $\chi$.
To study this question, we will consider the one-parameter family of test functions
\begin{equation*}
    \varphi_d(\ell)=C_d\left(1-\cos(2\pi\ell)\right)^d\mathds{1}_{[0,1]}(\ell),
\end{equation*}
where $d$ is a positive integer{, which we call \emph{order},} and $C_d$ is chosen so that $\int_{-\infty}^{+\infty}\varphi_d(\ell)\,d\ell=1$.
We easily check that the function $\varphi_d$ is of class $\mathcal{C}^{2d-1}$ on $\mathbb R$. { We show in \ref{appFT} that 
\begin{equation*}
    C_d=\frac{2^d(d!)^2}{(2d)!}
\end{equation*}
and that the Fourier transform of $\varphi_d$ is 
\begin{equation*}
    \hat \varphi_d(k)=\frac{(-1)^d(d!)^2}{\pi\Pi_{j=-d}^d(k/(2\pi)+j)}e^{ik/2}\sin(k/2).
\end{equation*}
}
Our goal is to give rigorous error estimates and to perform numerical trials for this family of functions.

\section{General estimate}
\label{sec:gen}
\subsection{Calculation with exact spectrum}

For a fixed graph $\Gamma$, we assume that only the smallest $J$ eigenvalues of the standard Laplacian are known. We consider the distribution (\ref{eq:def_u}) evaluated on the Fourier transform of a generic function $\varphi \in \mathcal{T}$ and split the expression in the following way
\begin{equation}\label{eq:SR_def}
        u[\hat{\varphi}_t] = \underbrace{2 \sum_{j \leq J} \Re\left(\hat {\varphi_t}(k_j)\right)}_{\textstyle S_J(t)} + \underbrace{2\sum_{j > J} \Re\left(\hat {\varphi_t}(k_j)\right)}_{\textstyle 
         R_J(t)}.
\end{equation}
We call $S_J(t)$ the \emph{truncated sum} and $R_J(t)$ the \emph{remainder} or \emph{tail}.
We have
\begin{equation}\label{eq:triang_ineq}
    |S_J(t) - \chi| \leq |u[\hat{\varphi}_t] - \chi| + |R_J(t)|.
\end{equation}
Let $\varphi \in \mathcal{T}$ such that $T = 1$ in Condition (ii) be fixed. Because of Proposition \ref{prp:exact} when $t \geq 1/\ell_{\mbox{\tiny min}}$ the term $|u[\hat{\varphi}_t] - \chi|$ is zero, hence $|S_J(t) - \chi| \leq |R_J(t)|$.
\begin{align}
        |R_J(t)|    &\leq 2\sum_{j > J} |\Re \hat{\varphi}(k_j / t)| \\
                    &\leq 2\sum_{j > J} \sup_{k \geq k_j } \abs{\Re \hat{\varphi}(k / t)} = 2\sum_{j > J} F_\varphi (k_j / t),
\end{align}
where $F_\varphi$, is defined in Condition (iv) of $\mathcal{T}$.
In particular, $F_\varphi$ is positive decreasing to zero \footnote{This follows immediately from the fact that $\Re \hat{\varphi}$ has only isolated zeros. Indeed, it is a real-analytic function which is not identically zero.}.
At this point we employ the classical eigenvalue estimate $k_j \geq (j - M) \pi  / \mathcal{L}$ (see for instance \cite{Be17}) and we bound the summation over the integers of a monotone function by its integral.
\begin{align}
        |R_J(t)|    &\leq 2\sum_{j > J - M} F_\varphi \left( \frac{\pi}{\mathcal{L}}\frac{j}{t}  \right)\\
                    &\leq 2 \int_{J - M}^\infty F_\varphi \left( \frac{\pi}{\mathcal{L}}\frac{x}{t} \right) \,dx \\
                    &= 2\frac{\mathcal{L} t}{\pi} \int_{\frac{\pi}{\mathcal{L} t}(J - M)}^\infty F_\varphi \left( \mbox{x} \right) \,d\mbox{x} =: \mathcal{E}_\varphi(J - M,\mathcal{L}t).
\end{align}
The function $\mathcal{E}_\varphi:\mathbb{R}^2_+ \to \mathbb{R}_+$ is continuous, positive, and monotone with respect to both arguments.
In particular $\mathcal{E}_\varphi(J-M, \cdot)$ is strictly monotone increasing, while $\mathcal{E}_\varphi(\cdot,{  \mathcal{L}t})$ is strictly monotone decreasing to zero.
Since $\chi$ is an integer number (generally negative) it is enough to ensure that $|R_J(t)| < 1/2$ in order to compute $\chi$.
Therefore we define $J_{\mbox{\tiny min}}(M,\mathcal{L}t) :=\min \{ J \in \mathbb{N} : \mathcal{E}_\varphi(J - M,\mathcal{L}t) < 1/2 \}$, this is the minimum number of eigenvalues needed for computing $\chi$.
This can be further extended to the case where only partial information on $\Gamma$ is known. The result is summarised in
\begin{theorem}\label{thm:estimate_exact}
Let $\Gamma$ be a graph which
\begin{itemize}
    \item has at most $\overline{M}$ vertices, i.e. $M \leq \overline{M}$;
    \item has total length at most $\overline{\mathcal{L}}$, i.e. $\mathcal{L} \leq \overline{\mathcal{L}}$;
    \item has its shortest orbits at least $\underline{\ell_{\mbox{\tiny min}}}$ long, i.e. $\ell_{\mbox{\tiny min}} \geq \underline{\ell_{\mbox{\tiny min}}}$.
\end{itemize}
Let $\varphi \in \mathcal{T}$ with $T=1$ and let $t = 1 / \underline{\ell_{\mbox{\tiny min}}}$.
Then $J \geq J_{\mbox{\tiny min}} = J_{\mbox{\tiny min}}(\overline{M},\overline{\mathcal{L}}t)$ eigenvalues of the standard Laplacian on $\Gamma$ suffice for recovering the Euler characteristic of $\Gamma$ via the formula
\begin{equation} \label{eq:exact}
    \chi(\Gamma) = \mbox{\emph{nint}} \left[ S_{J} \left( t \right) \right], 
\end{equation}
where $ \mbox{\emph{nint}}(x)$ denotes the nearest integer function.
\end{theorem}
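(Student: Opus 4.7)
The plan is to assemble the theorem as essentially a bookkeeping exercise built on the estimates developed in the paragraphs just before the statement. The heart of the argument has already been produced; what remains is to transport the bound $|S_J(t)-\chi|\le \mathcal{E}_\varphi(J-M,\mathcal{L}t)$ from the exact, unknown quantities $(M,\mathcal{L},\ell_{\mbox{\tiny min}})$ to their available a priori bounds $(\overline{M},\overline{\mathcal{L}},\underline{\ell_{\mbox{\tiny min}}})$, and then to translate a numerical bound on the right-hand side into integer recovery via rounding.

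First I would verify that Proposition \ref{prp:exact} applies at our chosen dilation. Since $\ell_{\mbox{\tiny min}}\ge \underline{\ell_{\mbox{\tiny min}}}$, the choice $t=1/\underline{\ell_{\mbox{\tiny min}}}$ satisfies $t\ge 1/\ell_{\mbox{\tiny min}}$, so the exact identity $u[\hat{\varphi}_t]=\chi$ holds. Using (\ref{eq:triang_ineq}) this collapses the triangle inequality to $|S_J(t)-\chi|\le |R_J(t)|$, and the chain of estimates immediately preceding the theorem gives $|R_J(t)|\le \mathcal{E}_\varphi(J-M,\mathcal{L}t)$.

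Next I would invoke the stated monotonicity of $\mathcal{E}_\varphi$: it is strictly decreasing to zero in its first slot and strictly increasing in its second. The a priori bounds $\overline{M}\ge M$ and $\overline{\mathcal{L}}\ge \mathcal{L}$ give $J-\overline{M}\le J-M$ and $\overline{\mathcal{L}}t\ge \mathcal{L}t$, so $\mathcal{E}_\varphi(J-M,\mathcal{L}t)\le \mathcal{E}_\varphi(J-\overline{M},\overline{\mathcal{L}}t)$. Finally, by the very definition of $J_{\mbox{\tiny min}}(\overline{M},\overline{\mathcal{L}}t)$, the hypothesis $J\ge J_{\mbox{\tiny min}}$ forces $\mathcal{E}_\varphi(J-\overline{M},\overline{\mathcal{L}}t)<1/2$, so $|S_J(t)-\chi|<1/2$; since $\chi\in\mathbb{Z}$, rounding yields $\chi=\mbox{nint}[S_J(t)]$, which is (\ref{eq:exact}).

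The only thing to be careful about --- and arguably the only place one could slip --- is keeping the directions of monotonicity consistent: replacing $M$ by the larger $\overline{M}$ relaxes the first argument of $\mathcal{E}_\varphi$ and hence increases it, while replacing $\mathcal{L}$ by the larger $\overline{\mathcal{L}}$ also increases $\mathcal{E}_\varphi$ through its second argument, and replacing $\ell_{\mbox{\tiny min}}$ by the smaller $\underline{\ell_{\mbox{\tiny min}}}$ enlarges $t$ and thus the product $\mathcal{L}t$ that appears inside. Each substitution weakens the estimate in the direction we want, so the sufficient condition ``$\mathcal{E}_\varphi<1/2$'' remains sufficient once the true parameters are replaced by their bounds. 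No new analytic input beyond Proposition \ref{prp:exact} and the monotonicity properties already recorded is needed.
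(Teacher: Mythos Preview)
Your proposal is correct and follows exactly the same route as the paper's own proof: apply Proposition~\ref{prp:exact} (since $t=1/\underline{\ell_{\mbox{\tiny min}}}\ge 1/\ell_{\mbox{\tiny min}}$) to reduce (\ref{eq:triang_ineq}) to $|S_J(t)-\chi|\le \mathcal{E}_\varphi(J-M,\mathcal{L}t)$, then use the monotonicity of $\mathcal{E}_\varphi$ in each argument to pass to $\mathcal{E}_\varphi(J-\overline{M},\overline{\mathcal{L}}t)$, and conclude from the definition of $J_{\mbox{\tiny min}}$. The only minor point is that in your final paragraph the remark about ``replacing $\ell_{\mbox{\tiny min}}$ by $\underline{\ell_{\mbox{\tiny min}}}$ enlarges $t$'' is superfluous: $t$ is already fixed as $1/\underline{\ell_{\mbox{\tiny min}}}$ in the hypotheses, so no substitution for $t$ occurs in the monotonicity step---the lower bound on $\ell_{\mbox{\tiny min}}$ is used solely to justify invoking Proposition~\ref{prp:exact}.
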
 
\begin{proof}
    Because $t = 1 / \underline{\ell_{\mbox{\tiny min}}} \geq 1 / \ell_{\mbox{\tiny min}}$ then Proposition \ref{prp:exact} implies $|\chi - u(\hat{\varphi_t})| = 0$, hence (\ref{eq:triang_ineq}) reads $|\chi - S_J(t)| \leq |R_J(t)| \leq \mathcal{E}_\varphi(J - M,\mathcal{L}t)$.
    Because of the monotonicity of $\mathcal{E}_\varphi$, we have $\mathcal{E}_\varphi(J - M,\mathcal{L}t) \leq \mathcal{E}_\varphi(J - \overline{M},\overline{\mathcal{L}}t)$. Hence for $J \geq J_{\mbox{\tiny min}}$ it follows that $|\chi - S_J(t)| < 1/2$, and thus we obtain (\ref{eq:exact}).
\end{proof}
Notice that the cases $S_J(t) = \chi \pm 1/2$ is excluded, hence we do not need to worry of the definition of the function $\mbox{nint}[\cdot]$ over the half integers.

We notice that the hypothesis of the above theorem cannot be relaxed too much.
In particular, the condition $\ell_{\mbox{\tiny min}} \geq \underline{\ell_{\mbox{\tiny min}}}$ is crucial, because adding small loops to any graph perturbs its smallest eigenvalues only a little, but at the same time it does dramatically affect its topological structure, decreasing $\chi$ by $1$.

We also notice that the quantity $\overline{\mathcal{L}}$ can be obtained from other information on the graph when available.
For instance if the number of edges of the graph is at most $\overline{N}$ then $\displaystyle \mathcal{L} \leq \min_{2 \leq j \leq J} ({j} + \overline{N}) \pi / k_j$.

\begin{remark}\label{rmk:extension_thm_exact}
The above Theorem \ref{thm:estimate_exact} can be reproduced after replacing $F_\varphi$ by any other majorant function monotone decreasing. The $J_{\mbox{\tiny min}}$ computed will be different. We will use this remark in Section 4.
\end{remark}

\subsection{Calculation with approximate spectrum}
\label{ss:approxspectrum}
Since we have in mind to apply our method in an experimental context we need to consider the case where the eigenvalues are known only up to some approximation. We denote by $\{\widetilde{k}_j \}_{j=1}^J$ the approximate spectrum where each eigenfrequency is measured with precision $\delta$, meaning that $\abs{\widetilde{k}_j-k_j} \leq \delta$ for all $1 \leq j \leq J$.
Hence we define $\widetilde{S}_J(t) = 2 \sum_{j \leq J} \Re\hat {\varphi_t}(\tilde{k}_j)$ the truncated sum evaluated at the approximated eigenvalues, hence our new aim is to approximate $\chi$ from $\widetilde{S}_J(t)$.
Therefore to the error contribution $\mathcal{E}_\varphi(J - M,\mathcal{L}t)$ we need to add a new term
\begin{equation}
    \abs{\widetilde{S}_J(t) - \chi}  \leq \mathcal{E}_\varphi(J - M,\mathcal{L}t) + \abs{\widetilde{S}_J(t) - S_J(t)}.
\end{equation}

This second term can be bounded linearly in $\delta$ in the following way
{
\begin{align}
     \abs{\widetilde{S}_J(t) - S_J(t)}
    &= \abs{2\sum_{j \leq J} \Re (\hat {\varphi}(\widetilde{k_j}/t) - \hat {\varphi}(k_j/t))} \label{eq:error:start}\\
    &\leq 2 \abs{\sum_{j \leq J} \int_0^1  \left( \cos{\widetilde{k_j}\ell /t}  -  \cos{k_j \ell /t} \right)\varphi(\ell)\,d\ell} \\
    &\leq 2  \abs{ \sum_{j \leq J} \int_0^1 -2 \sin{\frac{ (\widetilde{k_j} - k_j) \ell}{2t}} \sin{\frac{(\widetilde{k_j} + k_j)\ell}{2t}}\varphi(\ell)  \,d\ell} \\
    &\leq  \frac{2\delta}{t} \sum_{j \leq J} \abs{ \int_0^1 \sin{\frac{(\widetilde{k_j} + k_j)\ell}{2t}}\varphi(\ell)  \,d\ell} \label{eq:error:step}\\
    &\leq  \frac{2 \delta J}{t}.\label{eq:error:end}
\end{align}
}
Therefore we define
\begin{equation}\label{eq:error_approx}
    \widetilde{\mathcal{E}}_\varphi(J - M,\mathcal{L},t) := \mathcal{E}_\varphi(J - M,\mathcal{L}t) + \frac{2\delta J}{t}.
\end{equation}
For this new function there does not necessarily always exists $J$ such that $\widetilde{\mathcal{E}}_\varphi(J - M,\mathcal{L},t) <1/2$.
For simplicity we consider separately the following two inequalities, which together are sufficient:
\begin{itemize}
        \item $\mathcal{E}_\varphi(J - M, \mathcal{L} t) < 1/4$;
        \item $2\delta J / t < 1/4$.
\end{itemize}
We derive from them the following two conditions:
\begin{itemize}
        \item \hfill 
        \makebox[0pt][r]{%
            \begin{minipage}[b]{\textwidth}
              \begin{equation}
                 \label{eq:cond1}
            J \geq \widetilde{J}_{\mbox{\tiny min}}(M,\mathcal{L}t) := \min \{ J \in \mathbb{N} : J > M,\, \mathcal{E}_\varphi(J - M,\mathcal{L}t) < 1/4 \};
              \end{equation}
          \end{minipage}}
        \item \hfill 
        \makebox[0pt][r]{%
            \begin{minipage}[b]{\textwidth}
              \begin{equation}
                 \label{eq:cond2}
            \delta \leq \widetilde{\delta}_{\mbox{\tiny max}}(t,J) := t / (8J).
              \end{equation}
          \end{minipage}}
\end{itemize}
If the first $J$ eigenvalues are known such that both the above conditions are satisfied then we are able to compute the Euler characteristic via the approximated spectrum.
The above analysis can be summarised in the following

\begin{theorem}
    \label{thm:estimate_approx}
Let $\Gamma$, $\varphi$ and $t$ be as in the hypotheses of Theorem \ref{thm:estimate_exact}, and moreover let the first $J$ eigenvalues of the standard Laplacian on $\Gamma$ be known up to error $\delta$. Assume that $J$ and $\delta$ satisfy both conditions (\ref{eq:cond1}) and (\ref{eq:cond2}) with $M, \mathcal{L}$ replaced by $\overline{M}, \overline{\mathcal{L}}$. Then the Euler characteristic of $\Gamma$ can be recovered via the following formula
\begin{equation} \label{eq:approx}
    \chi(\Gamma) = \mbox{\emph{nint}} \left[ \widetilde{S}_{J} \left( t \right) \right]. 
\end{equation}
\end{theorem}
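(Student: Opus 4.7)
The plan is to assemble the pieces that have already been prepared in Section \ref{ss:approxspectrum} and in the proof of Theorem \ref{thm:estimate_exact}. First I would start from the triangle inequality
\begin{equation*}
    \bigl|\widetilde{S}_J(t) - \chi\bigr| \leq \bigl|S_J(t) - \chi\bigr| + \bigl|\widetilde{S}_J(t) - S_J(t)\bigr|,
\end{equation*}
which reduces the task to controlling each of the two summands by $1/4$, so that the total error is strictly less than $1/2$ and the rounding in \eqref{eq:approx} gives exactly $\chi$.

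For the first summand I would invoke the proof of Theorem \ref{thm:estimate_exact}: since $t = 1/\underline{\ell_{\mbox{\tiny min}}} \geq 1/\ell_{\mbox{\tiny min}}$, Proposition \ref{prp:exact} gives $u[\hat\varphi_t] = \chi$, so by \eqref{eq:triang_ineq} and the estimate of $|R_J(t)|$ one has $|S_J(t)-\chi| \leq \mathcal{E}_\varphi(J-M,\mathcal{L}t)$. Using the monotonicity of $\mathcal{E}_\varphi$ in both arguments together with $M \leq \overline{M}$ and $\mathcal{L} \leq \overline{\mathcal{L}}$, this is bounded by $\mathcal{E}_\varphi(J-\overline{M},\overline{\mathcal{L}}t)$, which by condition \eqref{eq:cond1} is strictly less than $1/4$.

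For the second summand I would simply quote the chain of inequalities \eqref{eq:error:start}--\eqref{eq:error:end}, which uses the sum-to-product identity for cosines, the elementary bound $|\sin x|\leq |x|$ applied to the factor involving $\widetilde{k}_j-k_j$, and the trivial bound $|\sin(\cdot)\varphi(\ell)|\leq \varphi(\ell)$ together with $\int \varphi = 1$. This gives $|\widetilde{S}_J(t) - S_J(t)| \leq 2\delta J/t$, which by condition \eqref{eq:cond2} is at most $1/4$.

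Combining both bounds yields $|\widetilde{S}_J(t) - \chi| < 1/2$, and as in Theorem \ref{thm:estimate_exact} this strict inequality excludes the half-integer ambiguity of $\mbox{nint}[\cdot]$, proving \eqref{eq:approx}. There is no real obstacle here: all the analytic work was performed in Section \ref{ss:approxspectrum}, and the only thing one must take care of is to split the $1/2$ budget into two equal halves, which is precisely how conditions \eqref{eq:cond1} and \eqref{eq:cond2} were designed. The mildest subtlety is just to make sure that the monotonicity argument for $\mathcal{E}_\varphi$ is applied in the correct direction (decreasing in $J-M$ and increasing in $\mathcal{L}t$), so that enlarging $M$ to $\overline{M}$ and $\mathcal{L}$ to $\overline{\mathcal{L}}$ indeed preserves the inequality.
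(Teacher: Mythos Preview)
Your proposal is correct and follows exactly the same approach as the paper: the theorem is stated as a summary of the analysis in Section~\ref{ss:approxspectrum}, which proceeds via the same triangle inequality, the same bound $|S_J(t)-\chi|\leq \mathcal{E}_\varphi(J-M,\mathcal{L}t)$ carried over from Theorem~\ref{thm:estimate_exact} (with the monotonicity argument to pass to $\overline{M},\overline{\mathcal{L}}$), and the same estimate \eqref{eq:error:start}--\eqref{eq:error:end} for $|\widetilde{S}_J(t)-S_J(t)|$, with conditions \eqref{eq:cond1} and \eqref{eq:cond2} designed precisely to split the $1/2$ error budget into two halves.
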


\section{Example of a family of test functions}

\subsection{Error estimate and optimal order}

We now apply the analysis in Section \ref{sec:gen} to { the test functions $\varphi_d$ introduced at the end of Section \ref{sec:Trace}}. In the rest of the section, we assume that $t=1/\underline{\ell_{\mbox{\tiny min}}}$. We have
\begin{equation*}
    \Re\left(\hat \varphi_d(k)\right)=\frac{(-1)^d(d!)^2\sin(k)}{2\pi\Pi_{j=-d}^d(k/(2\pi)+j)},
\end{equation*}
so that, when $k>2\pi d$,
\begin{equation*}
    F_\varphi(k)\le\frac{(d!)^2}{2\pi(k/(2\pi)-d)^{2d+1}}.
\end{equation*}
Let us write $R_{d,J}(t)$ for the remainder denoted in Section \ref{sec:gen} by $R_J(t)$, in the case where $\hat\varphi=\hat\varphi_d$. According to Remark \ref{rmk:extension_thm_exact}, we find, for $J>2t\mathcal L d+M$, 
\begin{equation*}
   \left|R_{d,J}(t)\right|\le \frac{{2}t\mathcal L}\pi\int_{\frac\pi{t\mathcal L}(J-M)}^{+\infty}\frac{(d!)^2}{2\pi(k/(2\pi)-d)^{2d+1}}\,dk=\frac{(d!)^2(2t\mathcal L)^{2d+1}}{2\pi d(J-M-2t\mathcal L d)^{2d}}.
\end{equation*}
Using the inequality $d!\le \sqrt{2\pi d}d^de^{-d+{1/12}}$ (see for instance Formula (3.9) in \cite{Ar64}), we find that
\begin{equation}\label{eq:eps_pre}
    \left|R_{d,J}(t)\right|\le\varepsilon(M,\rho,d,J),
\end{equation}
with  $\rho:=2t\mathcal L$ and
\begin{equation}\label{eq:eps}
    \varepsilon(\mu,\gamma,\alpha,\beta):=\frac{\alpha^{2\alpha}e^{-2\alpha+1/6}\gamma^{2\alpha+1}}{(\beta-\mu-\gamma\alpha)^{2\alpha}}.
\end{equation}
Let us note that we have replaced the discrete (integer) values $d$ and $J$ by the continuous parameters $\alpha$ and $\beta$.
\begin{theorem}\label{thm:opt_d}
For fixed $M$ and $\rho$ positive and $\overline\varepsilon\in(0,1)$, there exists $J^*$, the smallest among the integers $J$ such that $\varepsilon(M,\rho,d,J)\le\overline\varepsilon$ is satisfied for some $d$. There exists then $d^*$, the smallest positive integer $d$ such that $\varepsilon(M,\rho,d,J^*)\le\overline\varepsilon$. Furthermore, these integers are
given by the following formulas
\begin{equation}
    \label{eq:opt_J}
    J^*=\min\{\lceil\beta(\overline\varepsilon,M,\rho,d)\rceil\,:\,d\in\{\lfloor\alpha^*(\overline\varepsilon,\rho)\rfloor,\lceil\alpha^*(\overline\varepsilon,\rho)\rceil\}\}
\end{equation}
and
\begin{equation}
    \label{eq:opt_d}
    d^*=\min\{d\,:\,d\in\{1,\dots,\lceil\alpha^*(\overline\varepsilon,\rho)\rceil\}\mbox{ and }\lceil\beta(\overline\varepsilon,M,\rho,d)\rceil=J^*\},
\end{equation}
where
\begin{equation*}
        \beta(\overline{\varepsilon},M,\rho,d)=M+\rho d\left(1+\frac1e\left(\frac{e^{1/6}\rho}{\overline{\varepsilon}}\right)^{1/(2d)}\right)
    \end{equation*}
and
\begin{equation*}
        \alpha^*(\overline{\varepsilon},\rho)=\frac1{2(1+W(1))}\ln\left(\frac{e^{1/6}\rho}{\overline{\varepsilon}}\right).
    \end{equation*}
\end{theorem}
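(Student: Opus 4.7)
The plan is first to invert the inequality $\varepsilon(M,\rho,d,J)\le\overline\varepsilon$ in the variable $J$ for each fixed positive integer $d$, and then to minimize the resulting continuous threshold over $d$ using calculus. For the first step, taking the $2d$-th root of both sides of the explicit expression (\ref{eq:eps}) with $\mu=M,\gamma=\rho,\alpha=d$ and rearranging shows that $\varepsilon(M,\rho,d,J)\le\overline\varepsilon$ is equivalent (for $J>M+\rho d$) to $J\ge\beta(\overline\varepsilon,M,\rho,d)$, with $\beta$ as given in the statement. Hence the smallest integer $J$ satisfying the inequality for this particular $d$ is $\lceil\beta(\overline\varepsilon,M,\rho,d)\rceil$, and thus $J^*=\min_{d\ge 1}\lceil\beta(\overline\varepsilon,M,\rho,d)\rceil$; it remains to identify the minimizing $d$.

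Next I would treat $\beta$ as a smooth function of the continuous parameter $\alpha>0$. Setting $c:=e^{1/6}\rho/\overline\varepsilon$ and substituting $y=\ln(c)/(2\alpha)$ (so that $c^{1/(2\alpha)}=e^y$), a short computation gives $\beta'(\alpha)=\rho\bigl[1-(y-1)e^{y-1}\bigr]$, so the critical-point equation $\partial_\alpha\beta=0$ reduces to $(y-1)e^{y-1}=1$. By the defining property of the Lambert function this yields $y-1=W(1)$, and solving back for $\alpha$ recovers the claimed $\alpha^*(\overline\varepsilon,\rho)=\ln(c)/(2(1+W(1)))$.

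To promote $\alpha^*$ to the unique global minimum, I would use that $z\mapsto ze^z$ is strictly increasing on $(-1,\infty)$: this shows $\beta'$ has at most one zero and that it changes sign from negative to positive at $\alpha^*$. Combined with $\beta(\alpha)\to\infty$ as $\alpha\to 0^+$ (forced by $c^{1/(2\alpha)}\to\infty$ when $c>1$) and as $\alpha\to\infty$ (forced by the linear term $\rho\alpha$), this gives that $\beta$ is strictly decreasing on $(0,\alpha^*)$ and strictly increasing on $(\alpha^*,\infty)$. Consequently the minimum of $\beta$ over positive integers is attained at one of $\lfloor\alpha^*\rfloor,\lceil\alpha^*\rceil$, and since the ceiling function preserves order, the same holds for the minimum of $\lceil\beta(\cdot)\rceil$, which yields (\ref{eq:opt_J}). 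For $d^*$, the unimodality of $\beta$ implies that for any integer $d>\lceil\alpha^*\rceil$ one has $\beta(d)\ge\beta(\lceil\alpha^*\rceil)$, hence $\lceil\beta(d)\rceil\ge J^*$; therefore the smallest integer $d$ realising $\lceil\beta(d)\rceil=J^*$ must lie in $\{1,\ldots,\lceil\alpha^*\rceil\}$, giving (\ref{eq:opt_d}).

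The main obstacle I expect is the critical-point calculation: the substitution $y=\ln(c)/(2\alpha)$ that exposes the Lambert-$W$ structure is not immediate, and one still has to confirm both the uniqueness of the critical point and that it is a minimum rather than a maximum, which is where the monotonicity of $z\mapsto ze^z$ becomes genuinely useful. A minor subtlety is the edge case $c\le 1$ (i.e.\ very small $\rho$), where $\alpha^*\le 0$ and $\beta$ is already monotone on $\alpha>0$; in this regime one should interpret the formulas as forcing $d^*=1$, but this case is irrelevant in practice since $\overline\varepsilon$ is taken small.
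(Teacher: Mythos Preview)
Your proposal is correct and follows essentially the same route as the paper: invert $\varepsilon=\overline\varepsilon$ for $\beta$, differentiate in the continuous parameter $\alpha$, reduce the critical-point equation to $se^s=1$ via the same substitution (your $y$ is the paper's $s+1$), and deduce unimodality of $\beta$ to pin the integer minimizer to $\{\lfloor\alpha^*\rfloor,\lceil\alpha^*\rceil\}$. If anything, you are slightly more explicit than the paper about uniqueness of the critical point and about the ceiling/edge-case subtleties, which the paper dismisses with ``then follow easily.''
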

\begin{remark}\label{rem:wLambert} We denote by $W(1)$ the unique positive number satisfying $W(1)e^{W(1)}=1$. The notation $W(1)$ is standard and denotes the value of the Lambert W-function at $1$ (see \cite[Eq. 4.13.1]{NIST:DLMF}).
\end{remark}
\begin{remark}\label{rem:opt_d}
Existence of the integers $J^*$ and $d^*$ follows immediately from the fact that $\beta\mapsto\varepsilon(M,\rho,d,\beta)$ decreases to $0$ for any positive integer $M$ and $d$ and any $\rho>0$. Our actual goal is thus to prove Equations (\ref{eq:opt_J}) and (\ref{eq:opt_d}), which we use to compute them.
\end{remark}

\begin{proof}[Proof of Theorem \ref{thm:opt_d}] It follows from Equation (\ref{eq:eps}) that the unique $\beta\in(M+\rho\alpha,+\infty)$ which solves the equation $\varepsilon(M,\rho,\alpha,\beta)=\overline\varepsilon$ is 
\begin{equation}
\label{eq:beta}
    \beta(\overline{\varepsilon},M,\rho,\alpha)=M+\rho \alpha\left(1+\frac1e\left(\frac{e^{1/6}\rho}{\overline{\varepsilon}}\right)^{1/(2\alpha)}\right).
\end{equation}
This implies that, for any positive integer $d$, the smallest integer $J$ such that $\varepsilon(M,\rho,d,J)\le\overline\varepsilon$ is equal to $\lceil\beta(\overline\varepsilon,M,\rho,d)\rceil$.
Differentiating Equation (\ref{eq:beta}) with respect to $\alpha$, we find
\begin{equation*}
    \frac{d}{d\alpha}\beta(\overline\varepsilon,M,\rho,\alpha)=\rho\left(1+\frac1e\left(\frac{e^{1/6}\rho}{\overline{\varepsilon}}\right)^{1/(2\alpha)}\left(1-\frac1{2\alpha}\ln\left(\frac{e^{1/6}\rho}{\overline{\varepsilon}}\right)\right)\right).
\end{equation*}
Using the substitution 
\begin{equation*}
    s:=\frac1{2\alpha}\ln\left(\frac{e^{1/6}\rho}{\overline{\varepsilon}}\right)-1,
\end{equation*}
we see that this derivative vanishes if, and only if, $se^s=1$. This equation has a unique positive solution which is, by definition, $s=W(1)$. Therefore, the derivative vanishes only at $\alpha=\alpha^*(\overline\varepsilon,\rho)$, with
\begin{equation*}
    \alpha^*(\overline\varepsilon,\rho)=\frac1{2(1+W(1))}\ln\left(\frac{e^{1/6}\rho}{\overline{\varepsilon}}\right).
\end{equation*}
We can easily check that the derivative is negative below that value and positive above, so that $\alpha\mapsto\beta(\overline{\varepsilon},M,\rho,\alpha)$ is first decreasing and then increasing. The function $d\mapsto\beta(\overline{\varepsilon},M,\rho,d)$ takes its smallest value among all positive integers for the largest integer below $\alpha^*(\overline\varepsilon,\rho)$ or the smallest integer above. Equations (\ref{eq:opt_J}) and (\ref{eq:opt_d}) then follow easily.

\end{proof}

\section{Numerical examples}
\label{se:numerics}

We have numerically tested our method on some graphs for which we have computed the exact spectrum up to machine error precision and then added an error term uniformly distributed with prescribed variance.
In this section we give an overview of the implemented algorithm and show a few examples of its application.
{We recall the quantities that appear in the numerical analysis:
\begin{itemize}
    \item $t$ is the rescaling parameter of the test function;
    \item $d, (d^*)$ is the order of the test function $\varphi_d$, (respectively the minimal order according to equation (\ref{eq:opt_d}));
    \item $J, (J^*)$ is the number of the first smallest eigenfrequencies used to compute $S_J(t)$, (respectively the minimal number according to Equation (\ref{eq:opt_J}));
    \item $S_J(t)$ is the truncated sum defined in (\ref{eq:SR_def}) computed on the first $J$ eigenfrequencies of the exact spectrum where the test function is rescaled by $t$;
    \item $\widetilde{S}_J(t)$ is the same truncated sum as $S_J(t)$, but computed on a sample of the spectrum affected by a certain error term.
\end{itemize}
In view of the result of Proposition \ref{prp:exact} one might expect to obtain a good approximation of $\chi$ by computing the limit as $t \to \infty$ of the right hand side of (\ref{eq:chi_series}). Unfortunately this works out only if all eigenvalues are taken into account in the computation.
}

\subsection{Code overview}

The code consists of three distinct parts each of them developed to run on the software Mathematica \cite{MA11}.
\begin{enumerate}
    \item The first part is a GUI for building a graph with a drawing, drag-and-drop method and inserting the lengths of the edge.
    \item The second part contains an algorithm which computes the eigenvalues of the standard Laplacian of the graph in input.
    This is based on the von Below formula \cite{VB85} which holds for equilateral metric graphs, cf. \cite{Be13, Ku20}.
    This code shares parts with an other study in \cite{Ho20} on how this method can be extended to approximate eigenvalues of non equilateral metric graphs.
    Several samples of approximate spectra $\{\widetilde{k}_j\}_{j \in \mathbb{N}}$ are realized by summing a pseudo-random number uniformly distributed over a prescribed interval to the exact spectrum: $\widetilde{k}_j = k_j + e_j$,  $e_j \sim \mathcal{U}([-\delta,+\delta])$.
    \item The last part computes the parameters $d^*, J^*$, and Formula (\ref{eq:exact}), plots the maps $t,J \mapsto S_J(t)$ {and several instances of the the maps $t,J \mapsto \widetilde{S}_J(t)$ over different samples of spectra with error terms, and compares the error bound $\varepsilon(M,2t\mathcal{L},d,J) + 2 \delta J / t$ with the numerical errors $|\widetilde{S}_J(t)-\chi|$ over different values of $t$ and $J$}.
\end{enumerate}

\subsection{Comparison of the theoretical bounds with the experimental error}

\begin{figure}[ht]
\centering
    \includegraphics[width=.45\textwidth]{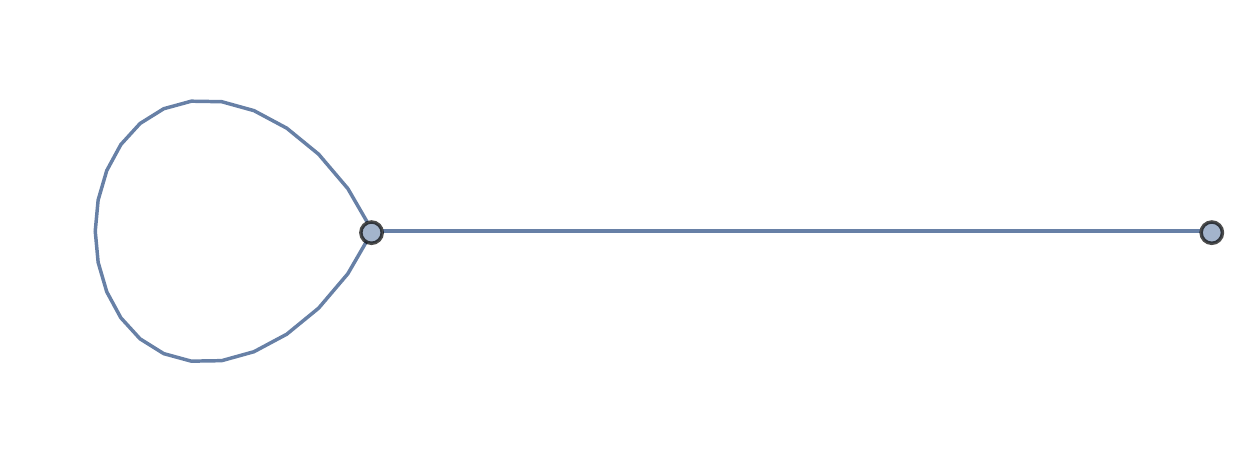}
    \includegraphics[width=.3\textwidth]{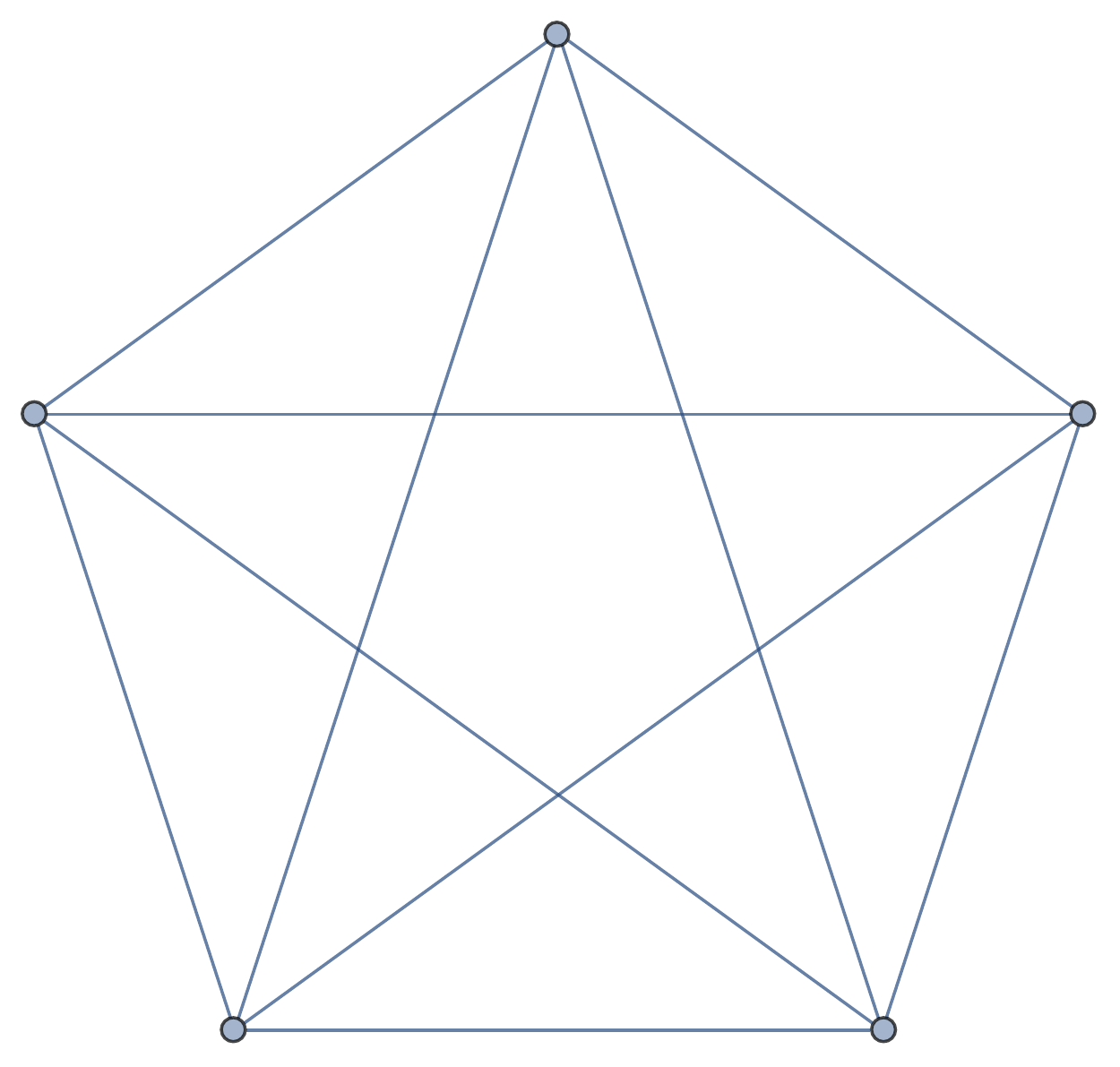}
    \caption{On the left the Lasso graph, $\chi = 0$, the loop has length $\ell_1 = 1$ and the pendant edge has length $\ell_2 = 5$, hence $\ell_{\mbox{\tiny min}} = 1$, $\rho = 12$ and $d^* = 1$; on the right the graph $K_5, \chi = -5$, each edge has assigned length one, hence $\ell_{\mbox{\tiny min}} = 2$, $\rho = 10$ and $d^* = 1$.
    The lengths of the edges are not to scale.}
    \label{fig:lasso_K5}
\end{figure}

Here we present some computations of {$S_J(t)$ and $\widetilde{S}_J(t)$ under different values of $J$ and $t$ compared with $\chi$} performed for the graphs Lasso and $K_5$ (see Figure \ref{fig:lasso_K5}). { We used the test function of optimal order $d^*$, equal to $1$ in both cases.}

\begin{figure}[ht]
\centering
    \includegraphics[width=.45\textwidth]{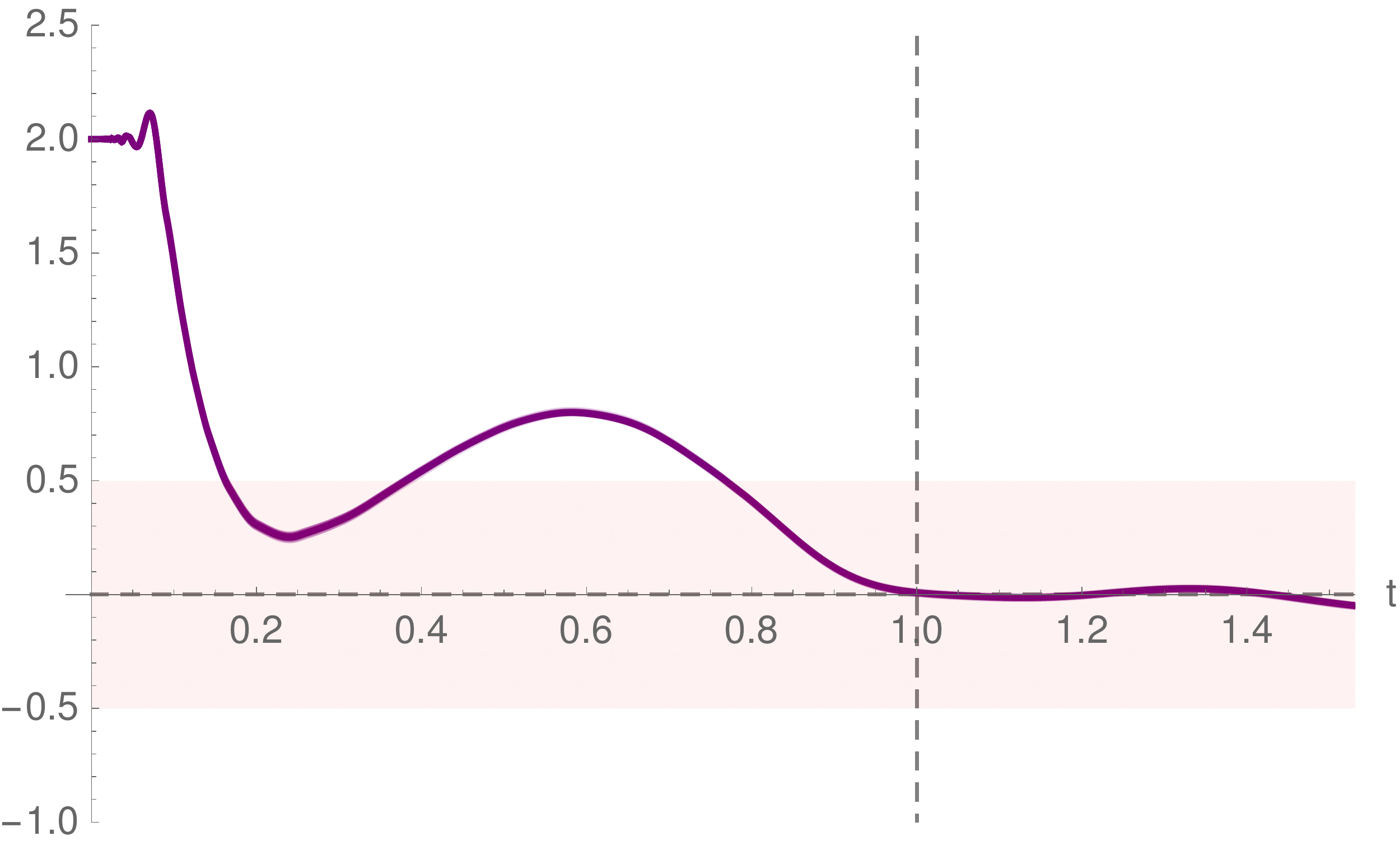}
    \includegraphics[width=.45\textwidth]{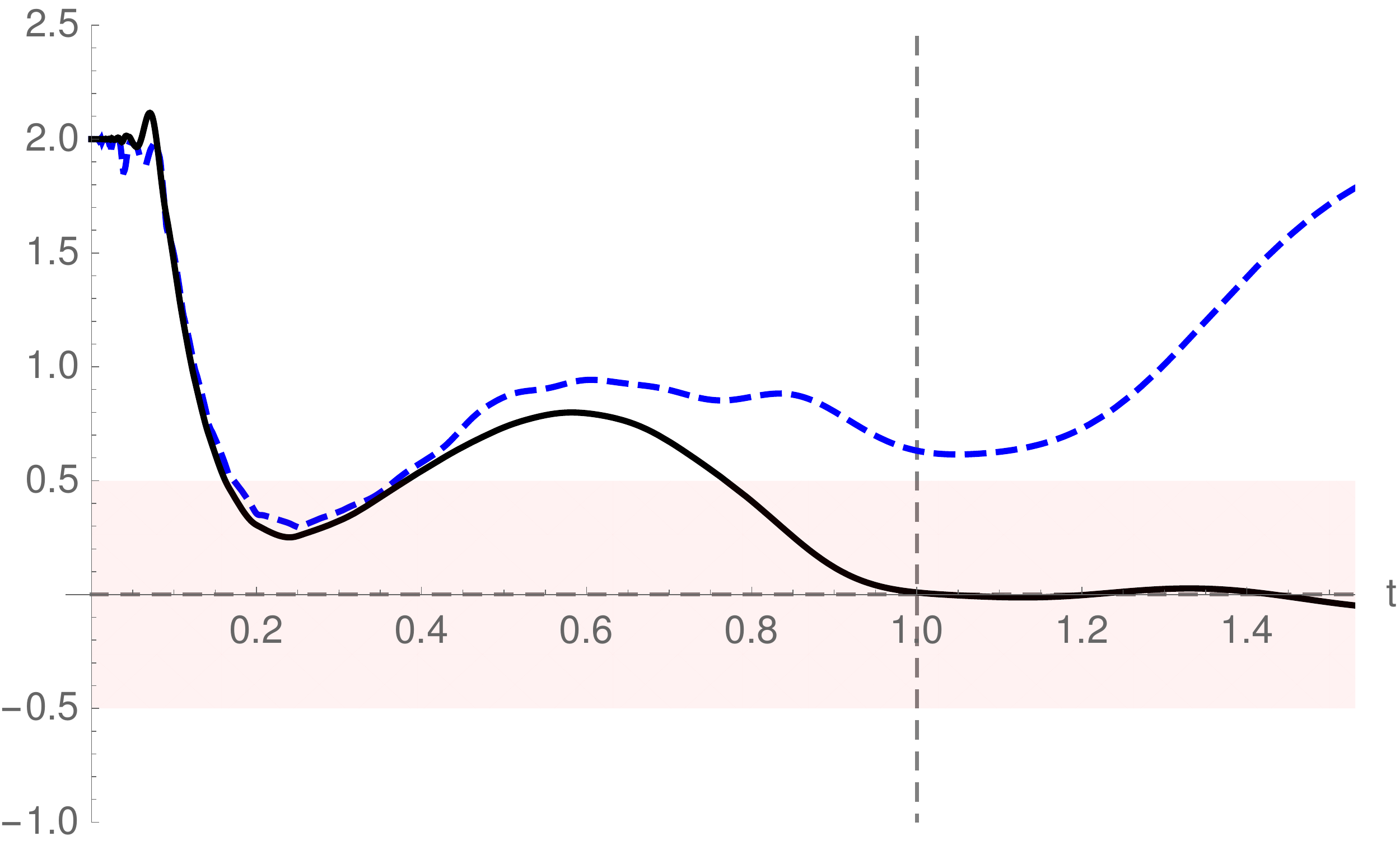}\\
    \includegraphics[width=.45\textwidth]{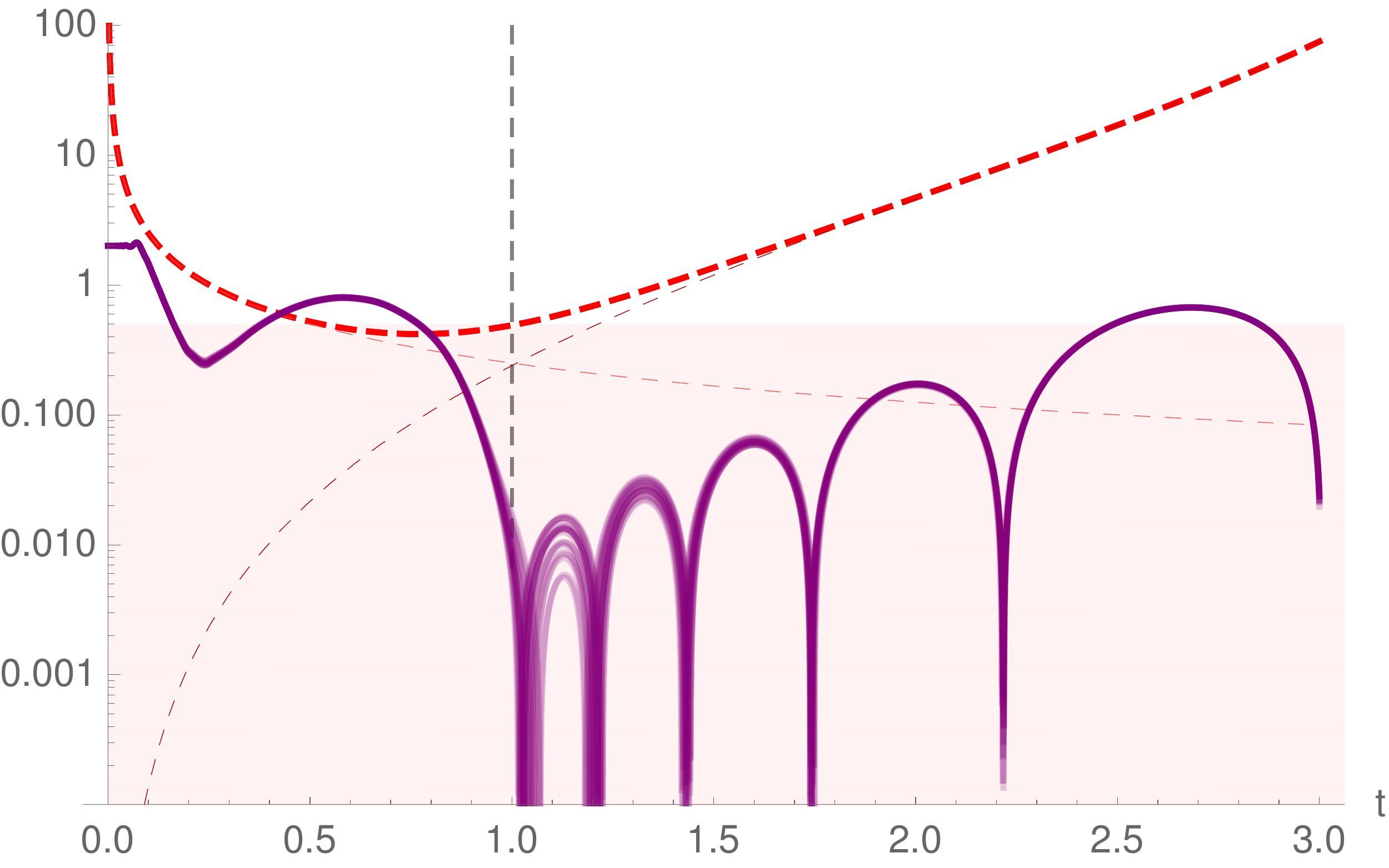}
    \includegraphics[width=.45\textwidth]{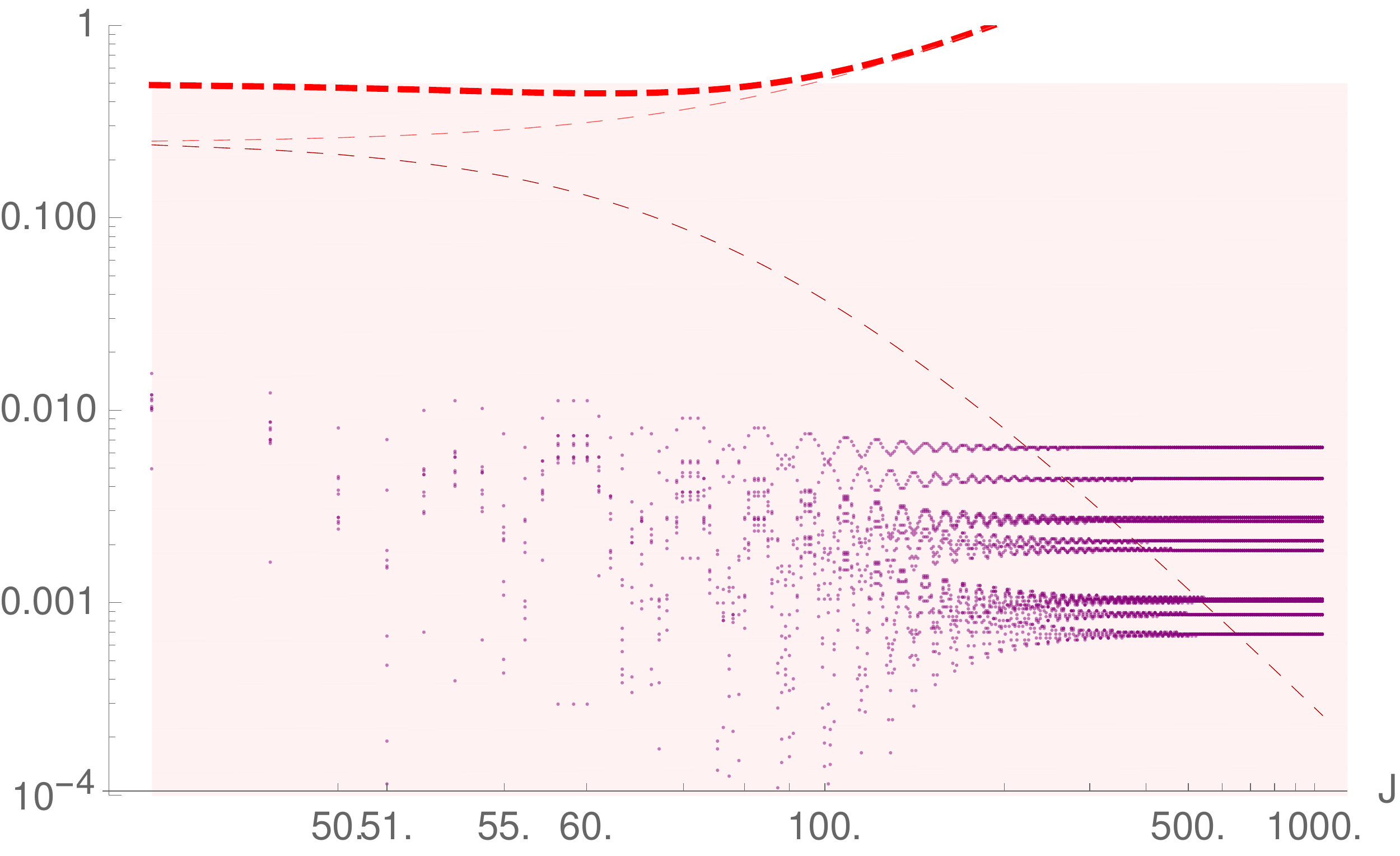}
    \caption{{Plots for the Lasso graph:
    In the first three plots $J = J^* = 48$ is fixed and the value $t = 1 / \ell_{\mbox{\tiny min}} = 1$ is shown with the dashed vertical line.
    In the bottom right plot $t = 1 / \ell_{\mbox{\tiny min}} = 1$ is fixed.
    The samples of approximate spectra are computed with $\delta \approx 2.6 \cdot 10^{-3}$.}}
    \label{fig:lasso_plots}
\end{figure}

Let us explain how the numerical results are displayed in Figures \ref{fig:lasso_plots} and \ref{fig:K5_plots}.
\begin{description}
        \item[Top left]: the function $t \mapsto S_J(t)$ (in black) is compared with
        
        { several instances of $t \mapsto \widetilde{S}_J(t)$, corresponding to different samples of the approximate spectrum.}
        The curves are not visibly distinguishable despite the different error terms.
        \item[Top right]: the function $t \mapsto S_J(t)$ is compared for two different test functions, $\varphi_d$ {(black solid curve)} and the triangular function $\psi$ {(blue dashed curve)}.
        \item[Bottom left]:
        
        { several instances of the error $t \mapsto |\chi - \widetilde{S}_J(t)|$ (purple solid curves) are compared with the theoretical bounds $t \mapsto \varepsilon(M,\rho,d,J) + 2\delta J / t$ (the red thick dashed curve, with the thin curves representing the two addends)}
        We recall that we have assumed $t \geq 1 / \ell_{\mbox{\tiny min}}$ when we computed the theoretical bounds.
        \item[Bottom right]:
        
        { several instances of the error  $J \mapsto |\chi - \widetilde{S}_J(t)|$ (purple dots) are compared with the theoretical bound $J \mapsto \varepsilon(M,\rho,d,J) + 2\delta J / t$ (the red thick dashed curve, with the thin curves representing the two addends)}.
\end{description}
\begin{figure}[ht]
\centering
    \includegraphics[width=.45\textwidth]{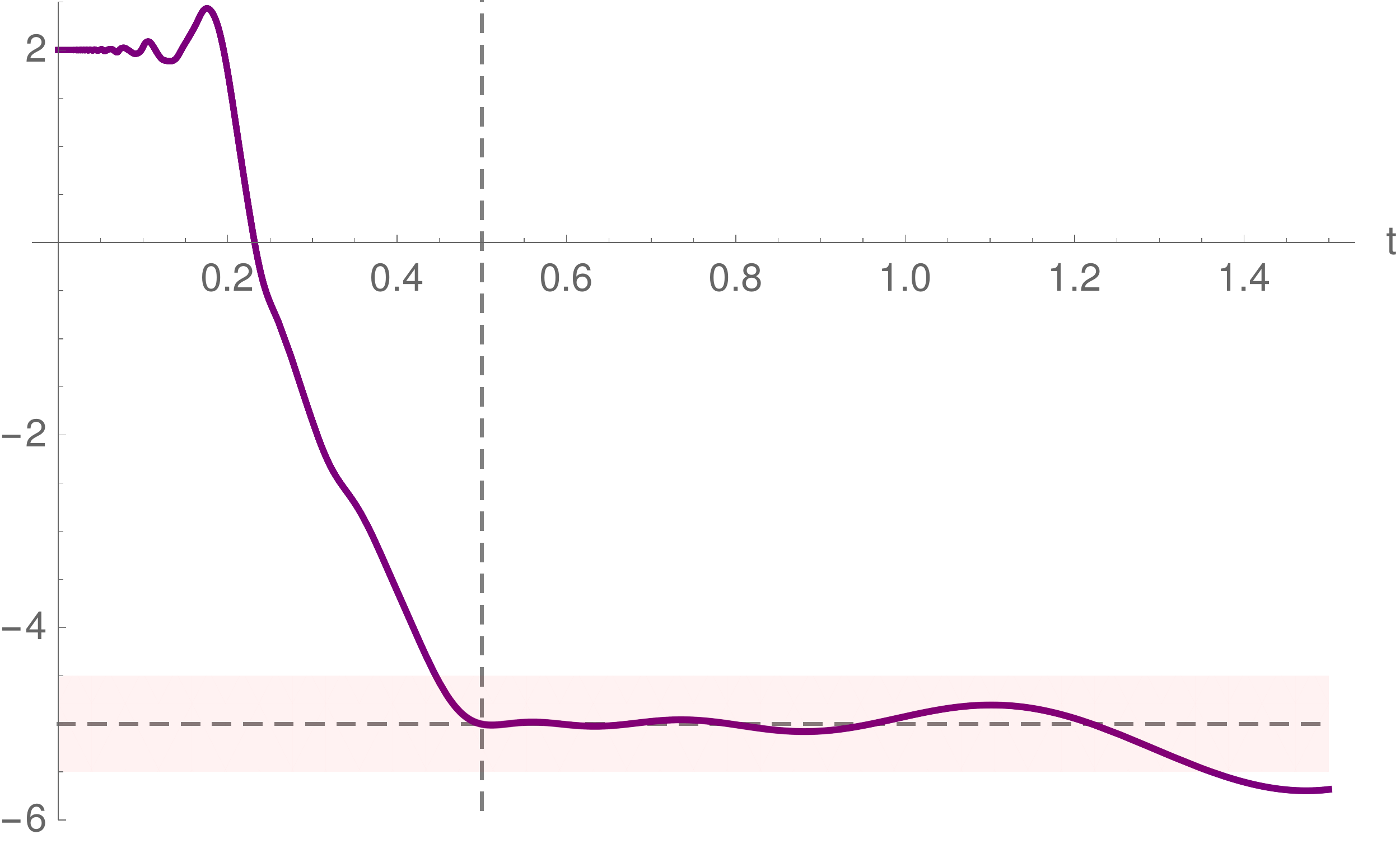}
    \includegraphics[width=.45\textwidth]{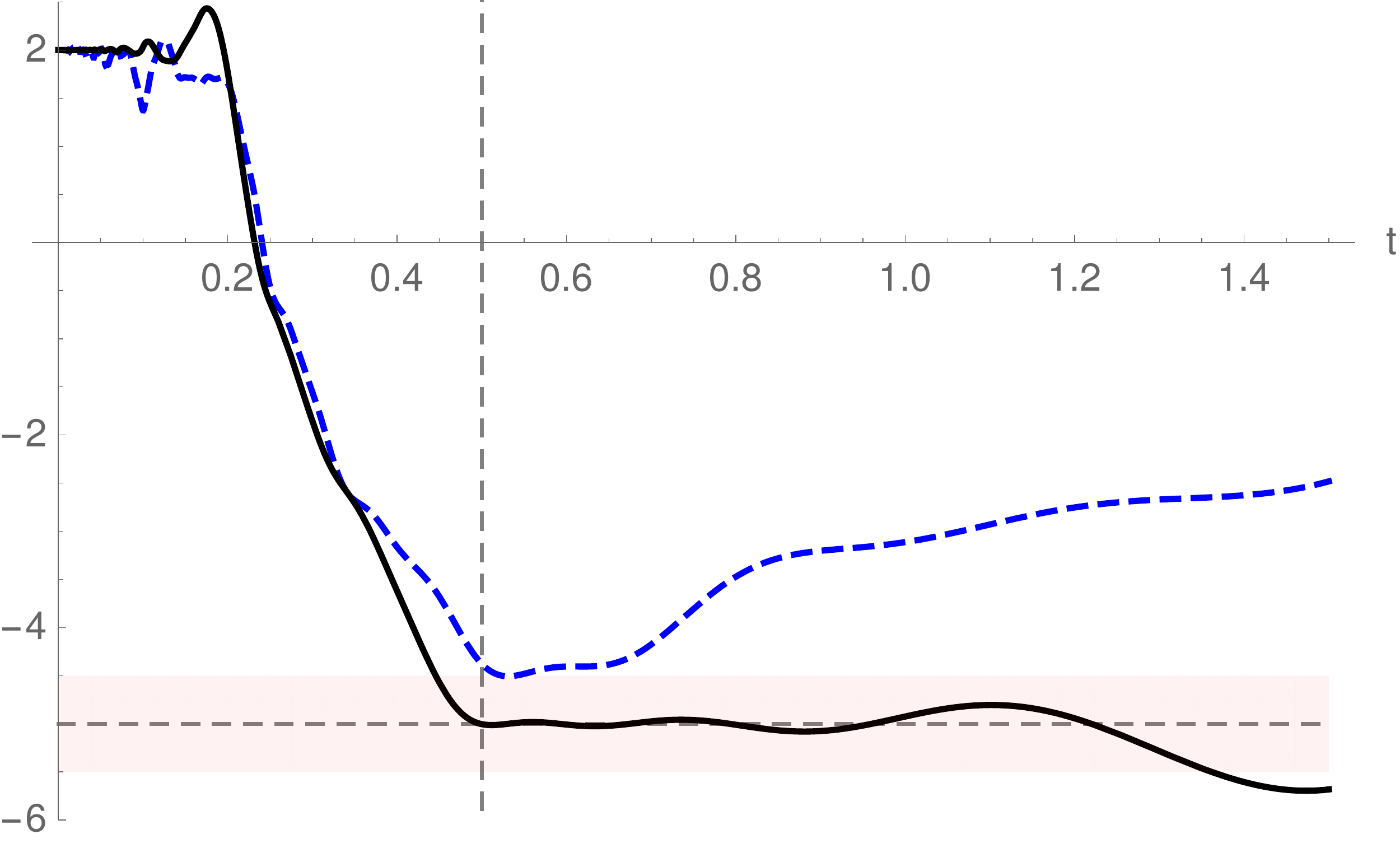}\\
    \includegraphics[width=.45\textwidth]{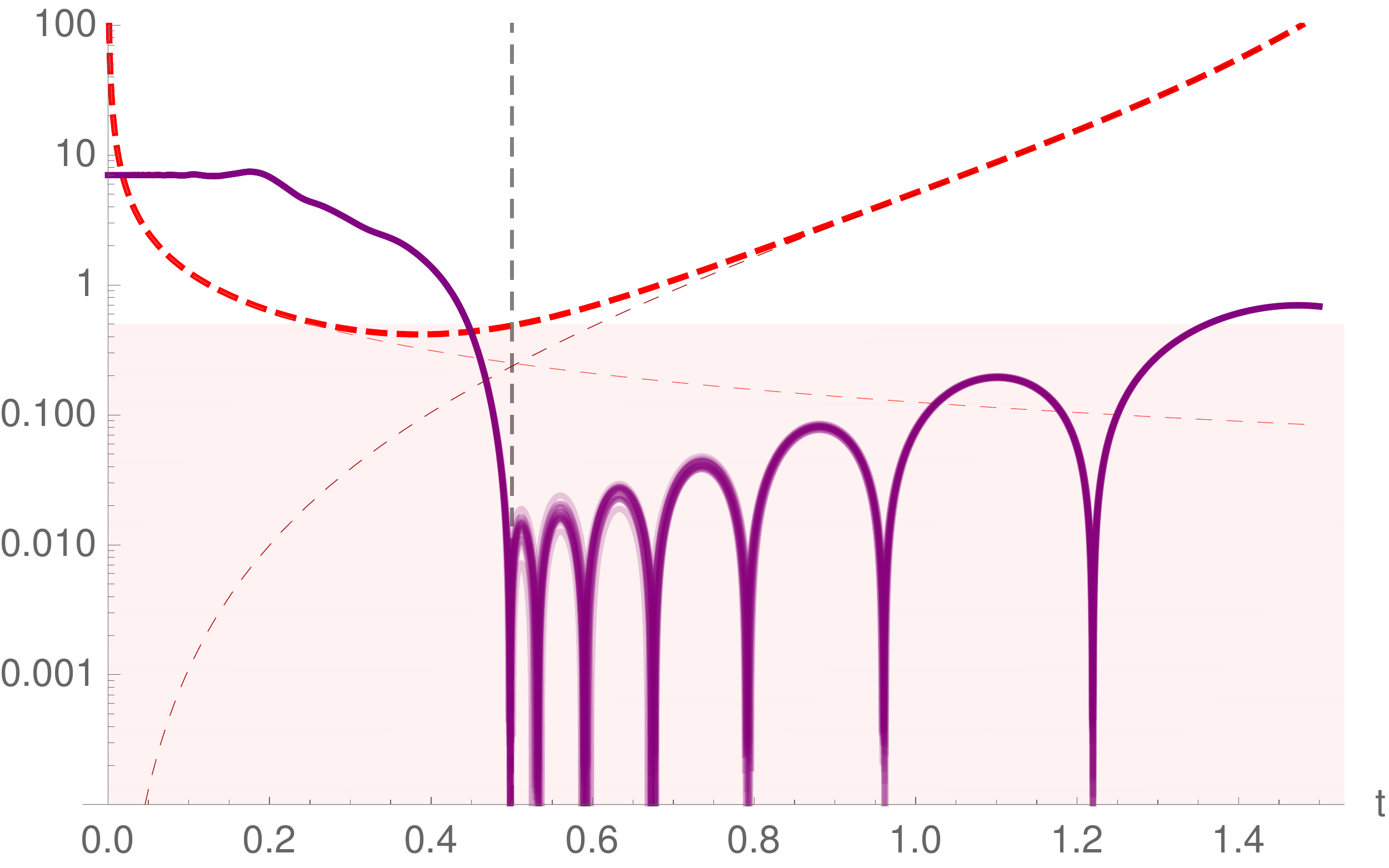}
    \includegraphics[width=.45\textwidth]{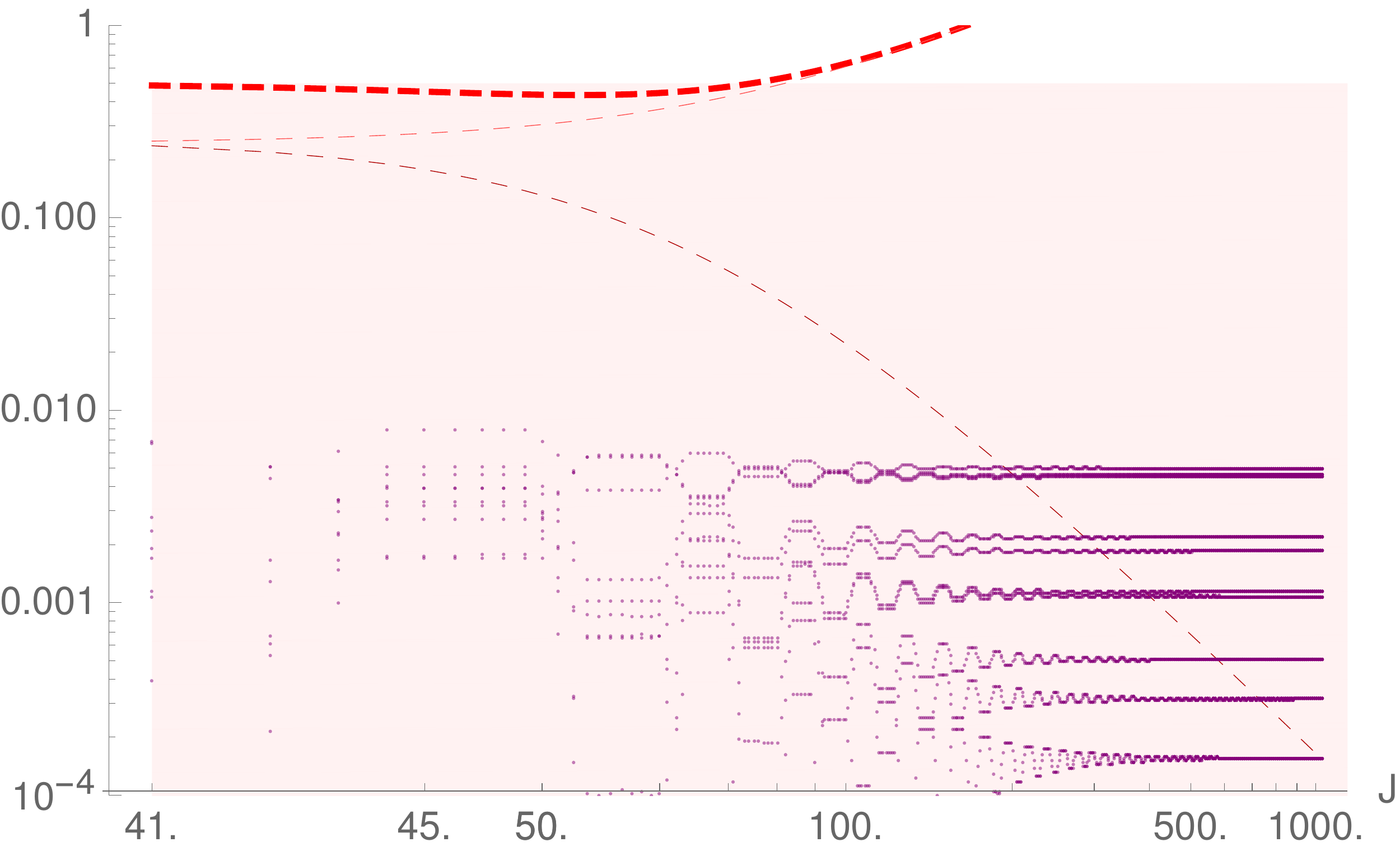}
    \caption{{Plots for the graph $K_5$:
    in the first three plots $J = J^* = 41$ is fixed and the value $t = {1 / \ell_{\mbox{\tiny min}}} = 0.5$ is shown with the dashed vertical line.
    In the bottom right plot $t = {1 / \ell_{\mbox{\tiny min}}} = 0.5$ is fixed.
    The samples of approximate spectra are computed with $\delta \approx 1.5 \cdot 10^{-3}$.}}
    \label{fig:K5_plots}
\end{figure}

\begin{observation}~
We now gather some observations which hold for both Figures \ref{fig:lasso_plots} and \ref{fig:K5_plots}.
\begin{enumerate}
    \item In the two plots at the top right we notice that the black curve approximates $\chi$ with higher precision than the blue curve, i.e. it stays within distance $1/2$ from $\chi$ for a certain interval in $t$.
    This shows that the method with $\varphi_d$ is significantly more efficient compared with $\psi$ in terms of number of eigenfrequencies sufficient in order to compute $\chi$.
    \item The two plots at the bottom show that the theoretical bound exceeds the numerical error by at least a factor $10^2$.
    This phenomenon can be explained by the several cancellations occurring inside the tail $R_{d,J}(t)$ which we did not take into account in (\ref{eq:eps_pre}).
    \item {In the bottom left plots it can be observed that both the theoretical bound and the numerical error grow with respect to the parameter $t$, indicating that it is important for the effectiveness of the method to have a good lower bound of $\ell_{\mbox{\tiny min}}$.}
    \item In the bottom right plot we observe that for $J$ growing the numerical error stabilizes at small positive values different for different samples.
    This suggests that the precision does not improve by taking $J \gg J^*$ and that the error mainly depends on the contribution from the first terms in the summation $\widetilde{S}_J(t)$.
    \item The theoretical bound shows to be even worse with respect to the numerical error for $J \gg J^*$, this can be explained by the decreasing magnitude of the Fourier coefficients of the test function $\varphi$ inside $|\widetilde{S}_J(t) - S_J(t)|$, which we did not take into account in  (\ref{eq:error:start}--\ref{eq:error:end}).
\end{enumerate}
\end{observation}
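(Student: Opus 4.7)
The plan is to justify each of the five qualitative claims analytically by returning to the explicit formulas for the tail $R_{d,J}(t)$, for the bound $\varepsilon$ in (\ref{eq:eps}), and for the perturbation chain (\ref{eq:error:start}--\ref{eq:error:end}), and exhibiting which features the a priori bounds deliberately discard. For item (1), I would compare the regularity of $\varphi_d$ with that of the triangular $\psi$: since $\psi$ is only continuous while $\varphi_d \in C^{2d-1}(\mathbb{R})$, successive integration by parts gives $|\hat{\psi}(k)|=O(k^{-2})$ versus $|\hat{\varphi}_d(k)|=O(k^{-(2d+1)})$, so the tail bound from Section \ref{sec:gen} forces $S_J(t)$ built with $\varphi_d$ to stay within $1/2$ of $\chi$ on a much wider range of $t$, matching the top-right plots.

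For items (2) and (5), which concern the looseness of the bound, I would trace the steps that introduce slack. In deriving (\ref{eq:eps_pre}) the oscillatory kernel $\Re(\hat{\varphi}_d(k))$ was replaced by its non-increasing majorant $F_{\varphi}$, discarding all cancellations carried by the $\sin k$ factor. Because $k_j \sim \pi j/\mathcal{L}$ by Weyl's law, the remainder $\sum_{j>J}\Re(\hat{\varphi}_d(k_j/t))$ behaves like a discrete Fourier series, and a summation by parts should recover the cancellations, accounting for the factor $10^2$. For item (5), the step from (\ref{eq:error:step}) to (\ref{eq:error:end}) replaces each integral of magnitude $|\hat{\varphi}(k_j/t)|$ by $1$, losing the $O(k_j^{-(2d+1)})$ decay; keeping it would turn the $2\delta J/t$ bound into a quantity convergent in $J$, matching the observation that the theoretical bound eventually exceeds the numerical error by a widening margin.

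For items (3) and (4) I would rely on direct inspection of the formulas. The bound $\varepsilon(M,2t\mathcal{L},d,J)$ in (\ref{eq:eps}) has leading $\rho$-dependence $\rho^{2d+1}$ with $\rho=2t\mathcal{L}$, so it is strictly monotone increasing in $t$; the corresponding monotonicity on the numerical side follows because a smaller $t$ concentrates $\hat{\varphi}_t(k)$ more tightly around $k=0$, suppressing its values at every $k_j>0$. For item (4), I would argue that the absolute convergence of $\sum_{j}(\hat{\varphi}_t(\widetilde{k}_j)-\hat{\varphi}_t(k_j))$, itself a consequence of the $O(k^{-(2d+1)})$ decay together with Lipschitz-in-$\delta$ perturbations, forces $\widetilde{S}_J(t)-\chi$ to converge to a sample-dependent limit whose magnitude is governed by the first few terms, where the Fourier weight $|\hat{\varphi}_t(k_j)|$ is largest.

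The hard part will be item (2): making the factor-$10^2$ improvement precise requires quantifying oscillatory cancellation in a sum sampled at eigenfrequencies that are only asymptotically arithmetic. A rigorous treatment must combine Abel summation with quantitative Weyl remainders and the explicit product structure of $\hat{\varphi}_d$, which is substantially more delicate than the decay accounting needed for the other four items.
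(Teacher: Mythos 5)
This ``Observation'' is not a theorem and the paper supplies no proof of it: it is empirical commentary on Figures~\ref{fig:lasso_plots} and~\ref{fig:K5_plots}, whose only justification is the numerical data itself together with the one-line heuristic explanations embedded in items (2), (3) and (5). Your proposal therefore answers a slightly different question --- namely, whether those heuristics can be turned into analysis --- and on that question it is faithful to the paper: the mechanisms you isolate are exactly the ones the authors invoke. For item (2) you correctly locate the slack in the replacement of the oscillatory $\Re\hat\varphi_d$ by its monotone majorant $F_\varphi$ in the derivation of (\ref{eq:eps_pre}); for item (5), in the replacement of $\bigl|\int_0^1\sin\bigl((\widetilde{k_j}+k_j)\ell/2t\bigr)\varphi(\ell)\,d\ell\bigr|$ by $1$ in passing from (\ref{eq:error:step}) to (\ref{eq:error:end}); for item (3) the monotonicity of $\varepsilon$ in $\rho=2t\mathcal L$ and of $\hat\varphi(k_j/t)$ in $t$ is a correct reading of (\ref{eq:eps}); for item (4) the absolute convergence of the perturbed series (which holds since $|\widetilde k_j - k_j|\le\delta$ preserves the Weyl growth of the $k_j$) does explain stabilization at a sample-dependent limit; and the decay comparison $O(k^{-2})$ versus $O(k^{-(2d+1)})$ for item (1) is confirmed by the explicit formulas for $\hat\psi$ and $\hat\varphi_d$.

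The genuine gap is one of category rather than of technique: the quantitative content of the observation --- the factor $10^2$, the value $J^*=48$ or $41$, the particular plateau heights in the bottom-right plots --- is a statement about two specific graphs and specific pseudo-random error samples, and cannot be derived a priori by any of the arguments you outline; it can only be read off the computation. You acknowledge this implicitly when you flag item (2) as requiring Abel summation against eigenfrequencies that are ``only asymptotically arithmetic'': for the equilateral $K_5$ the spectrum is in fact highly degenerate and far from equidistributed at the scales involved, so a clean stationary-phase or summation-by-parts gain of exactly two orders of magnitude is not something one should expect to prove, and the paper does not attempt it. Your sketch is best read as a correct elaboration of the paper's heuristics, not as a proof of the observation, because no proof exists or is claimed.
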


\subsection{Additional examples}

We compare the numerical computation of $\chi$ for the following three equilateral graphs $K_5$, $K_5$ with one edge made pendant, i.e. disconnected at one end-point, and $K_{3,3}$.
These graphs have Euler characteristic respectively $\chi = -5, -4$ and $-3$, they have all three been constructed equilateral with each edge of length one.
The first two graphs are chosen to have very close spectra. We can numerically observe that only $1$ in $5$ of their eigenfrequencies differ significantly (figure \ref{fig:comp_plots}).

\begin{figure}[ht]
\centering
    \includegraphics[width=.45\textwidth]{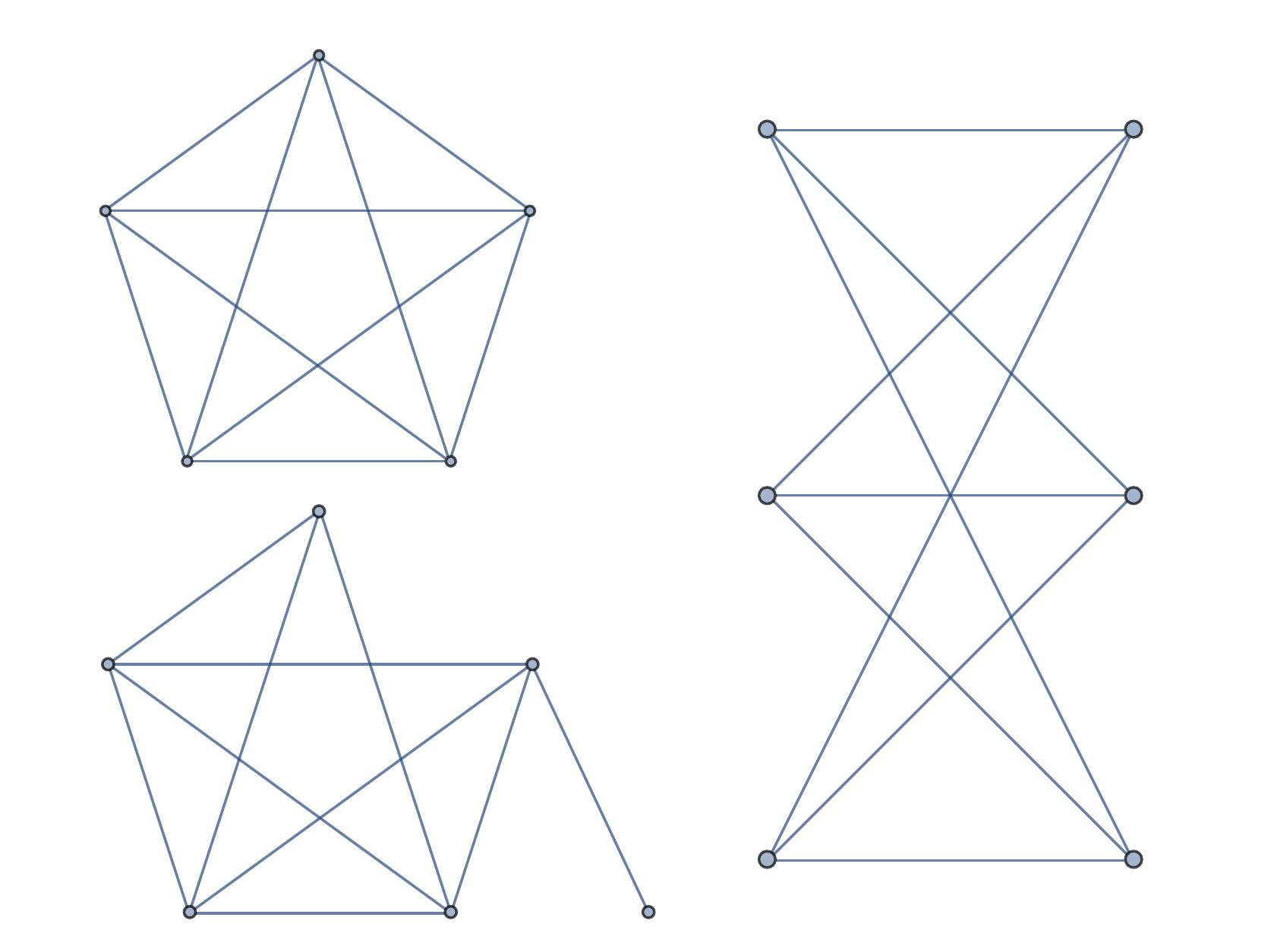}
    \includegraphics[width=.45\textwidth]{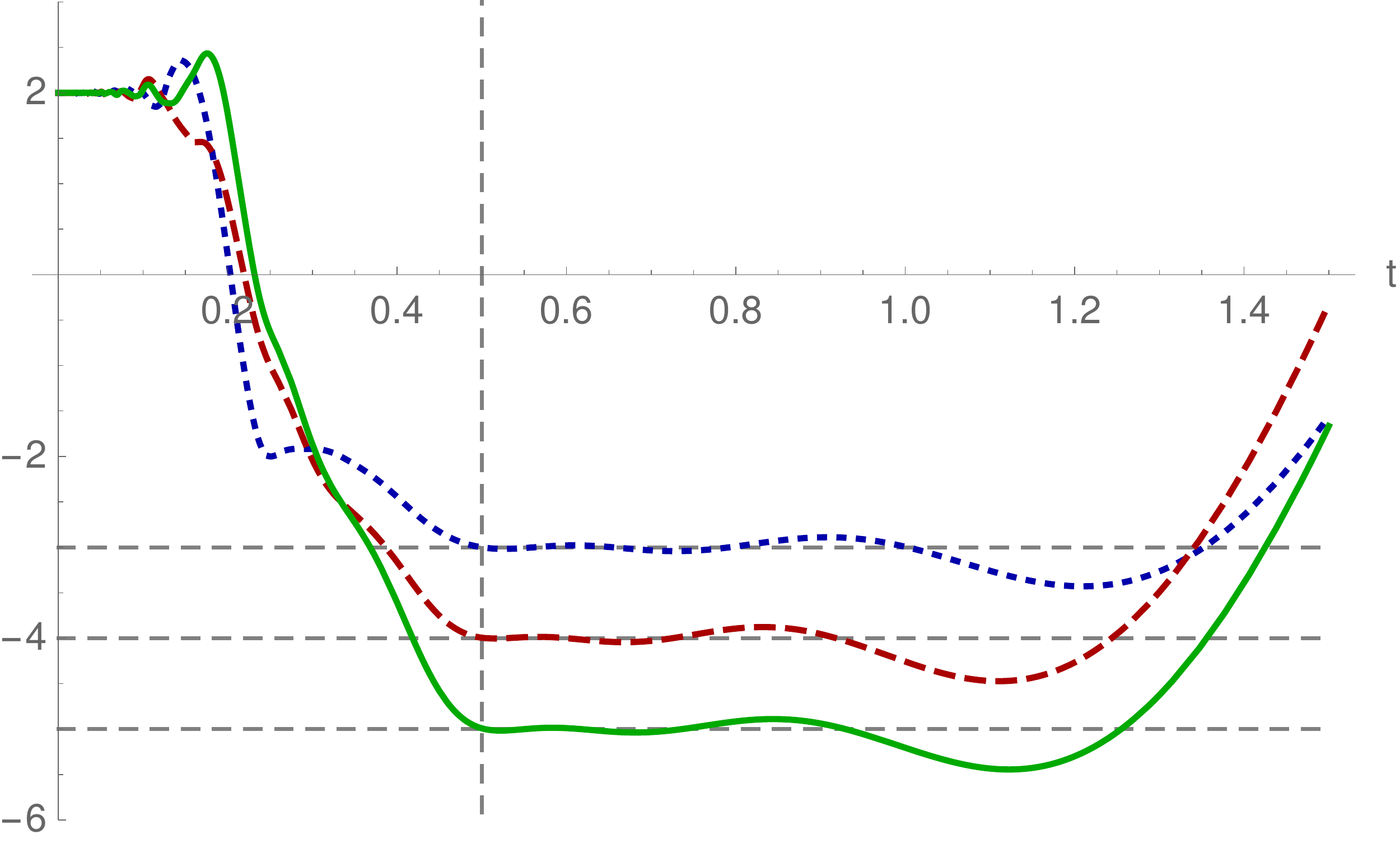}
    \caption{{For each graph in figure we plot the functions $t \mapsto S_{J}(t)$ with $d=1$ and $J = 30$: $K_{3,3}$ (blue dotted curve), $K_5$ with one endpoint disconnected (red dashed curve) and $K_5$ (green solid curve).}}
    \label{fig:comp_plots}
\end{figure}

\subsection{Influence of the {order} of the test function}

From Theorem \ref{thm:opt_d} we can notice that the values of $d^*$ and $J^*-M = \lceil \beta(\overline{\varepsilon},M,\rho,d) - M \rceil$ depend only on the parameters $\rho$ and $\overline{\varepsilon}$.
Following the idea described in Section \ref{ss:approxspectrum}, we fix $\overline{\varepsilon} = 1/4$ and, for increasing values of $\rho$, we compute the values of $d^*$ and bounds of $J^*-M$. The results are shown in the following table.
{
\begin{center}
\begin{tabular}{r c r | c | r c r}
& $\rho$ & & $d^*$ & & $J^* - M$ & \\
 \hline
 $2$ & $\leq \rho \leq $ & $15.6$    & $1$       & $5$ & $\leq J^* - M \leq$ & $65$  \\ 
 $15.6$ & $\leq \rho \leq $ & $16.5$    & $1$ or $2$& $65$ & $\leq J^* - M \leq$ & $70$  \\
 $16.5$ & $\leq \rho \leq $ & $421$     & $2$       & $70$ & $\leq J^* - M \leq$ & $2912$\\
 $421$ & $\leq \rho \leq $ & $423$     & $2$ or $3$& $2912$ & $\leq J^* - M \leq$ & $2925$\\
 $423$ & $\leq \rho \leq $ & $10^4$    & $3$       & $2925$ & $\leq J^* - M \leq$ & $10^5$
\end{tabular}
\end{center}}

Cases for which $d^*>1$ occur for $\rho > 16.5$ i.e. $\mathcal{L} >  8.3 \cdot \ell_{\mbox{\tiny min}}$. This depends on two factors: the distribution of the length of the edges and the number of edges.
We can identify two extreme cases.
\begin{itemize}
    \item The distribution of the edge length has large variance, i.e. the length of the shortest edge is very small compared to the total length.
    For instance, a Lasso graph with the loop at least $8$ times shorter than the pendant edge.
    \item The distribution of the edge length has null, or very small variance, but the number of edges is relatively large, at least $17$ if the graph does not have loops.
    For instance, the equilateral $K_7$.
\end{itemize}

\appendix
\section{Proof of Proposition \ref{prp:exact}}\label{appTrace}

A priori, we only know that Formula (\ref{eq:series}) holds for test functions $\varphi$ in the Schwartz class. In order to extend it to functions in the set $\mathcal T$, we use a standard argument of regularization by convolution. Let us give more details.   

We choose a function $\theta:\mathbb R\to \mathbb R$ of class $C^\infty$, non-negative, whose support is in $[-1,1]$. We also require that $\theta$ is even and that $\int_{-\infty}^{+\infty}\theta(\ell)\,d\ell=1$. It follows that $\hat\theta$ is real-valued and even, with $\hat\theta(0)=1$ and $|\hat\theta(k)|\le 1$ for all $k\in \mathbb R$. For any { positive integer $n$}, we define {$\theta_n(\ell):=n\theta(n\ell)$}. We have {$\hat\theta_n(k)=\hat\theta\left(\frac{k}{n}\right)$} for all $k\in \mathbb R$.

Let us fix {$\varphi\in\mathcal T$ and a positive integer $n$}. The function { $\varphi_{n}:=\theta_n*\varphi$} is of class $C^\infty$ with support in {$\left[-\frac1n,L+\frac1n\right]$ and satisfies $\int_{-\infty}^{+\infty}\varphi_n(\ell)\,d\ell=1 $.} Formula (\ref{eq:series}) applies and gives us
{
\begin{align}
 \chi+2\mathcal L \varphi_{n}(0)+\sum_{p\in\mathcal P}\ell(\mbox{prim}(p))S_V(p)\left(\varphi_{n}(-\ell(p))+\varphi_{n}(\ell(p))\right)\nonumber\\
=2 \hat {\varphi}(0) +2\sum_{k_j>0}{\hat\theta\left(\frac{ k_j}n\right)}\Re\left(\hat {\varphi}(k_j)\right),\label{eq:series_app}
\end{align}
}
where the series in the left-hand side reduces to a finite sum.

For all $\ell\in \mathbb R$, { $\varphi_{n}(\ell)\to \varphi(\ell)$ as $n\to \infty$}, so the left-hand side of Equation (\ref{eq:series_app}) converges to 
\begin{equation*}
\chi+\sum_{p\in\mathcal P}\ell(\mbox{prim}(p))S_V(p)\left(\varphi(-\ell(p))+\varphi(\ell(p))\right).
\end{equation*}
Condition (iv) in the definition of the set $\mathcal T$ implies that the series 
\begin{equation*}
    \sum_{k_j>0}\Re\left(\hat {\varphi}(k_j)\right)
\end{equation*}
is absolutely convergent. On the other hand, 
\begin{equation*}
\left|\hat\theta\left(\frac{ k_j}{n}\right)\Re\left(\hat {\varphi}(k_j)\right)\right|\le \left|\Re\left(\hat {\varphi}(k_j)\right)\right|,
\end{equation*}
while ${\hat\theta\left(\frac{ k_j}{n}\right)}\to \hat\theta(0)=1$ as $n\to\infty$. It follows that
\begin{equation*}
\sum_{k_j>0}\hat\theta\left(\frac{ k_j}n\right)\Re\left(\hat {\varphi}(k_j)\right)\to \sum_{k_j>0}\Re\left(\hat {\varphi}(k_j)\right)
\end{equation*}
as $n\to\infty$, using for instance the Dominated Convergence Theorem for the counting measure associated with the sequence $(k_j)_{j\ge1}$. Taking the limit $n\to\infty$ in Equation (\ref{eq:series_app}) therefore yields the desired result.

{
\section{Fourier transform of the test functions}\label{appFT}
}

Let us recall that we are considering the family of test functions
\begin{equation*}
    \varphi_d(\ell)=C_d\left(1-\cos(2\pi\ell)\right)^d\mathds{1}_{[0,1]}(\ell),
\end{equation*}
with $d$ a positive integer. To apply Proposition \ref{prp:exact}, we need to compute the Fourier transform of $\varphi_d$, and also to determine the normalisation constant $C_d$. Let us set, for $k\in \mathbb R$,
\begin{equation*}
    I_d(k):=\int_0^1 \left(1-\cos(2\pi\ell)\right)^de^{ik\ell}\,d\ell.
\end{equation*}
Standard trigonometric identities give us
\begin{equation*}
    I_d(k):=2^d\int_0^1\sin(\pi\ell)^{2d}e^{ik\ell}\,d\ell=(-1)^d2^{-d}\int_0^1(e^{i\pi\ell}-e^{-i\pi\ell})^{2d}e^{ik\ell}\,d\ell.
\end{equation*}
We can evaluate quickly the integral on the right using complex integration, and assuming $k>2\pi d$. Let us denote by $\gamma$ the closed path in the complex plane $\mathbb C$ obtained by adding the path $\gamma_0$, consisting of the segment $[-1,1]$ traversed in the positive direction, and $\gamma_1$, the semi-circle centered at $0$ traversed in the anti-clockwise direction from $1$ to $-1$. Then $I_d(k)=(-1)^d2^{-d}/(i\pi)\int_{\gamma_1}f(z)\,dz$, with
\begin{equation*}
    f(z)=\frac1z\left(z-\frac1z\right)^{2d}z^{k/\pi}=(z^2-1)^{2d}z^{k/\pi-2d-1}.
\end{equation*}
Since $\int_\gamma f(z)\,dz=0$, we obtain 
\begin{equation*}
    I_d(k)=-(-1)^d2^{-d}/(i\pi)\int_{\gamma_0}f(z)\,dz.
\end{equation*}
We find
\begin{equation*}
    I_d(k)=\frac{(-1)^{d+1}}{2^{d}i\pi}\left(\int_0^1(t^2-1)^{2d}t^{ k/\pi-2d-1}\,dt-e^{ik}\int_0^1(t^2-1)^{2d}t^{ k/\pi-2d-1}\,dt\right),
\end{equation*}
the two terms on the right-hand side being the integral on the positive and negative part of $\gamma_0$, respectively. Using the change of variable $t=\sqrt{s}$, we transform the expression into
\begin{align*}
     I_d(k)&=\frac{(-1)^{d+1}}{2^{d+1}i\pi}(1-e^{ik})\int_0^1(s-1)^{2d}{s}^{k/(2\pi)-d-1}\,{ds}\\
     &=\frac{(-1)^d}{2^d\pi}e^{ik/2}\sin(k/2)B(2d+1,k/(2\pi)-d),
\end{align*}
where $B$ denotes the Euler beta function. We have
\begin{equation*}
    B(2d+1,k/(2\pi)-d)=\frac{\Gamma(2d+1)\Gamma(k/(2\pi)-d)}{\Gamma(k/(2\pi)+d+1)}=\frac{(2d)!}{\Pi_{j=-d}^d(k/(2\pi)+j)},
\end{equation*}
where the last equality results from using, $2d+1$ times, the identity $\Gamma(x+1)=x\Gamma(x)$. We obtain
\begin{equation*}
     I_d(k)=\frac{(-1)^d(2d)!}{2^d\pi\Pi_{j=-d}^d(k/(2\pi)+j)}e^{ik/2}\sin(k/2).
\end{equation*}
Both sides of the equation define a function of $k$ which is holomorphic in $\mathbb C$. The equality, which we have proved { for $k >2\pi d$}, therefore holds for $k\in\mathbb C$ by the principle of isolated zeros. In particular, we find
\begin{equation*}
    C_d^{-1}=I_d(0)=\frac{(2d)!}{2^d(d!)^2},
\end{equation*}
and finally
\begin{equation*}
    \hat \varphi_d(k)=\frac{(-1)^d(d!)^2}{\pi\Pi_{j=-d}^d(k/(2\pi)+j)}e^{ik/2}\sin(k/2).
\end{equation*}

\section*{Acknowledgement}
The authors are grateful to Pavel Kurasov for suggesting to study the problem, providing valuable ideas, and allowing them to consult the preprints of \cite{Ku20,Mi20}. C.L. was partially supported by the Swedish Research Council (Grant D0497301).
The authors wish to thank the two anonymous referees for the corrections and useful remarks.
%\section*{References}
\bibliography{references}{}
\bibliographystyle{unsrt}

\end{document}